\newtheorem{thm}{Theorem}[section]
\newtheorem{prop}[thm]{Proposition}
\newtheorem{lem}[thm]{Lemma}
\newtheorem{claim}[thm]{Claim} 
\theoremstyle{definition}
\theoremstyle{remark}
\newtheorem{remark}[thm]{Remark}
\numberwithin{equation}{section}
\newcommand{\alg}{\overline{\mathbb{Q}}} 
\newcommand{\Q}{\mathbb{Q}} 
\newcommand{\Z}{\mathbb{Z}} 
\newcommand{\C}{\mathbb{C}} 
\newcommand{\h}{\mathbb{H}} 
\newcommand{\SL}{\mathrm{SL}_2(\Z)} 
\newcommand{\Gal}{\mathrm{Gal}} 
\DeclareMathOperator{\re}{Re} 
\DeclareMathOperator{\im}{Im} 
\DeclareMathOperator{\cl}{cl} 
\begin{document}
	
		\title[Andr\'e--Oort for sums of powers in $\C^n$]{Some uniform effective results on Andr\'e--Oort for sums of powers in $\C^n$}
	\author{Guy Fowler}
	\address{Department of Mathematics, University of Manchester, Manchester, UK
	\newline
	\indent
	Heilbronn Institute for Mathematical Research, Bristol, UK}
		\email{\href{mailto:guy.fowler@manchester.ac.uk}{guy.fowler@manchester.ac.uk}}
	\urladdr{\url{https://www.guyfowler.uk/}}
	\date{\today}
	\subjclass[2020]{11G18, 14G35}

	\begin{abstract}
	We prove an Andr\'e--Oort-type result for a family of hypersurfaces in $\C^n$ that is both uniform and effective. Let $K_*$ denote the single exceptional imaginary quadratic field which occurs in the Siegel--Tatuzawa lower bound for the class number. We prove that, for $m, n \in \Z_{>0}$, there exists an effective constant $c(m, n)>0$ with the following property: if pairwise distinct singular moduli $x_1, \ldots, x_n$ with respective discriminants $\Delta_1, \ldots, \Delta_n$ are such that $a_1 x_1^m + \ldots + a_n x_n^m \in \Q$ for some $a_1, \ldots, a_n \in \Q \setminus \{0\}$ and $\# \{ \Delta_i : \Q(\sqrt{\Delta_i}) = K_*\}  \leq 1$, then $\max_i \lvert \Delta_i \rvert \leq c(m, n)$. In addition, we prove an unconditional and completely explicit version of this result when $(m, n) = (1, 3)$  and thereby determine all the triples $(x_1, x_2, x_3)$ of singular moduli such that $a_1 x_1 + a_2 x_2 + a_3 x_3 \in \Q$ for some $a_1, a_2, a_3 \in \Q \setminus \{0\}$.
\end{abstract}
	
	\maketitle
	
	
	\section{Introduction}\label{sec:intro}
	
	A singular modulus is the $j$-invariant of an elliptic curve with complex multiplication. The discriminant of a singular modulus is defined to be the discriminant of the imaginary quadratic order isomorphic to the endomorphism ring of the corresponding elliptic curve. In particular, the discriminant is a negative integer. There are only finitely many singular moduli of a given discriminant and these may be computed effectively \cite[\S13]{Cox22}.
	
	Identify $\C$ with the modular curve $Y(1)$ via the $j$-invariant. A point $(x_1, \ldots, x_n) \in \C^n$ such that $x_1, \ldots, x_n$ are all singular moduli is called, in the terminology of Shimura varieties, a special point of $\C^n$. A special point is a zero-dimensional special subvariety of $\C^n$ (see \cite[Definition~4.10]{Pila22} for the general definition of a special subvariety of $\C^n$). 
	
	The Andr\'e--Oort conjecture, which was proved for $\C^n$ by Pila \cite{Pila11}, states that a subvariety $V \subset \C^n$ contains only finitely many maximal special subvarieties. In particular, a subvariety $V$ contains only finitely many special points which do not lie on the union of all the positive-dimensional special subvarieties of $V$. Pila's proof of Andr\'e--Oort has a strong uniformity, as illustrated by the following theorem. This result is a direct consequence of Pila's uniform Andr\'e--Oort theorem \cite[Theorem~13.2]{Pila11} and a result of Binyamini \cite[Corollary~4]{Binyamini19}. The result is ineffective, due to the ineffectivity of Pila's proof of Andr\'e--Oort (see \cite[\S13]{Pila11}).
	
	\begin{thm}\label{thm:ineffpowers}
		Let $m, n, d \in \Z_{>0}$. There exists an ineffective constant $c(m, n, d) > 0$ with the following property:
		 
		 Let $x_1, \ldots, x_n$ be pairwise distinct singular moduli and write $\Delta_i$ for the discriminant of $x_i$. If
		\[ a_1 x_1^m + \ldots + a_n x_n^m = b\]
		for some $a_1, \ldots, a_n \in \alg \setminus \{0\}$ and $b \in \alg$ with
		\[ [\Q(a_1, \ldots, a_n, b) : \Q] \leq d,\]
		then
		\[ \max \{ \lvert \Delta_i \rvert : i=1, \ldots, n\} \leq c(m, n, d).\]
	\end{thm}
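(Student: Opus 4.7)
The plan is to view the hypothesis as saying that the special point $(x_1, \ldots, x_n) \in \C^n$ lies on the hypersurface
\[ V_{a, b} = \{y \in \C^n : a_1 y_1^m + \ldots + a_n y_n^m = b\}, \]
which has degree $m$ independent of $(a, b) \in \alg^{n+1}$ and is defined over a number field of degree at most $d$. Pila's uniform Andr\'e--Oort theorem \cite[Theorem~13.2]{Pila11} bounds the number of maximal special subvarieties of a subvariety of $\C^n$ in terms only of $n$ and the degree, while Binyamini's \cite[Corollary~4]{Binyamini19} effective lower bound on Galois orbit sizes of special points of $Y(1)^n$ in terms of their discriminants converts such a count into the desired bound on $\max_i \lvert \Delta_i \rvert$.

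The main intermediate step is to show that $(x_1, \ldots, x_n)$ is itself maximal as a special subvariety of $V_{a, b}$, i.e., is not contained in any positive-dimensional special subvariety of $V_{a, b}$. A positive-dimensional special subvariety $W$ of $\C^n$ has at least one ``free'' block of indices on which the coordinates vary with a common upper-half-plane parameter $z$ via $y_i = j(g_i z)$ for pairwise inequivalent cosets $g_i \in \GL$; the other coordinates on $W$ are fixed to specific singular moduli. Restricting the equation $\sum_i a_i y_i^m = b$ to $W$, the non-constant contribution from each free block must vanish identically in $z$. Comparing $q$-expansions at the cusp shows that this forces every $a_i$ indexed by a free block to vanish, contradicting $a_i \neq 0$; identification of two free-block coordinates would likewise contradict the distinctness of the $x_i$.

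With this established, $(x_1, \ldots, x_n)$ and each of its $\Gal(\overline{\Q}/\Q(a_1, \ldots, a_n, b))$-conjugates are isolated maximal special subvarieties of $V_{a, b}$. Pila's uniform Andr\'e--Oort thus bounds the size of this Galois orbit by a constant $N(m, n)$, while Binyamini's corollary lower bounds it by a positive power of $\max_i \lvert \Delta_i \rvert$ up to a factor depending on $d$. Comparing the two bounds yields $\max_i \lvert \Delta_i \rvert \leq c(m, n, d)$. The principal obstacle is expected to be the case analysis of positive-dimensional special subvarieties, especially controlling the $q$-expansions of $j(g_i z)^m$ for various $g_i \in \GL$ to rule out accidental cancellations; ineffectivity in the final constant enters only through Pila's uniform Andr\'e--Oort.
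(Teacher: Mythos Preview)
Your overall plan matches the paper's: show the special point lies in $V_{a,b}\setminus V_{a,b}^{\mathrm{sp}}$, then invoke Pila's uniform Andr\'e--Oort over the definable family of such hypersurfaces. Two differences are worth flagging.

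For the isolation step the paper does not carry out a $q$-expansion analysis of $\sum_i a_i j(g_i z)^m$. Instead it uses Binyamini's Corollary~4 in a different capacity than you do: that result says that every positive-dimensional special subvariety of a \emph{hereditarily degree non-degenerate} hypersurface such as $V_{a,b}$ is cut out solely by equations of the form $z_i=z_k$ and $z_l=c_l$. The pairwise distinctness of the $x_i$ forbids any non-trivial $z_i=z_k$, so the putative $S$ would be $\prod_{i\in I}\{x_i\}\times\C^{n-|I|}$, which visibly forces $a_i=0$ for $i\notin I$. Your $q$-expansion route can be made to work, but the cancellations you anticipate are genuine (for instance, the leading $q$-terms of $j(z/2)^m$ and $j((z+1)/2)^m$ cancel when $m$ is odd), and handling them in general essentially reproves the hdnd input that Binyamini has already packaged.

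For the final step you cast Pila's theorem as a bound on the \emph{number} of maximal special subvarieties and then appeal to a Galois-orbit lower bound to reach $\max_i|\Delta_i|$. In the paper's treatment, Pila's Theorem~13.2 is quoted as already giving a discriminant bound for special points in $V\setminus V^{\mathrm{sp}}$, uniformly over the definable family and depending on the degree $d$ of the field of definition, so no separate orbit argument is written out. Your variant is fine, but the orbit lower bound you need is the (ineffective) class-number bound of Brauer--Siegel type, not Binyamini's Corollary~4; the paper reserves that corollary for the hdnd classification above.
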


Notably, the constant $c(m, n, d)$ in Theorem~\ref{thm:ineffpowers} does not depend on the height of the coefficients $a_1, \ldots, a_n, b$. In particular, given $m, n \in \Z_{>0}$, there are only finitely many $n$-tuples $(x_1, \ldots, x_n)$ of pairwise distinct singular moduli $x_1, \ldots, x_n$ such that there exist $a_1, \ldots, a_n \in \Q \setminus \{0\}$ and $b \in \Q$ with
\[ a_1 x_1^m + \ldots + a_n x_n^m = b.\]
	
 Theorem~\ref{thm:ineffpowers} is known effectively only when $n \leq 2$. For $n = 1$, this is a consequence of a result due to Goldfeld \cite{Goldfeld76} and Gross and Zagier \cite{GrossZagier86} (see Proposition~\ref{prop:class}). For $n = 2$, an effective version of Theorem~\ref{thm:ineffpowers} follows from a theorem of K\"uhne \cite[Theorem~4]{Kuhne13} combined with the aforementioned result of Goldfeld--Gross--Zagier.
 
 \subsection{Main results}
	
		The first main result of this paper is the following effective partial version of Theorem~\ref{thm:ineffpowers} in the case that $d=1$, i.e.~for equations over $\Q$. Throughout this paper, $K_*$ denotes a fixed imaginary quadratic field, the definition of which is given just before Proposition~\ref{prop:Tatbd}. Briefly, $K_*$ is the single exceptional field arising from an application of Tatuzawa's effective version \cite{Tatuzawa51} of Siegel's lower bound \cite{Siegel35} for the class number of an imaginary quadratic field. 
		
		\begin{thm}\label{thm:effpowers}
			Let $m, n \in \Z_{>0}$. There exists an effective constant $c(m, n)$ with the following property: 
			
			Let $x_1, \ldots, x_n$ be pairwise distinct singular moduli and write $\Delta_i$ for the discriminant of $x_i$. If
			\[ a_1 x_1^m + \ldots + a_n x_n^m = b\]
			for some $a_1, \ldots, a_n \in \Q \setminus \{0\}$ and $b \in \Q$ and
			\[  \# \left \{ \Delta_i :  \, i \in \left \{1, \ldots, n\right \} \mbox{ such that } \Q\left(\sqrt{\Delta_i}\right) = K_* \right \} \leq 1,\]
			then
			\[ \max \{ \lvert \Delta_i \rvert : i=1, \ldots, n\} \leq c(m, n).\]
		\end{thm}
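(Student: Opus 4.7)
My plan would be to proceed by induction on $n$, combining Tatuzawa's effective lower bound for class numbers, a Galois-theoretic argument, and the structure of the ring class fields attached to the $\Delta_i$.

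For the base case $n=1$, the hypothesis reduces to $x_1^m \in \Q$, and hence the degree $[\Q(x_1):\Q]$, which equals the (proper) ring class number of $\Delta_1$, must divide $m$. Tatuzawa's bound $h(\Delta) \gg_\varepsilon |\Delta|^{1/2-\varepsilon}$, which is effective whenever $\Q(\sqrt{\Delta}) \neq K_*$, then yields an effective upper bound on $|\Delta_1|$ in the non-exceptional case. The exceptional case is ruled out by the $K_*$-hypothesis combined with the fact that $K_*$ is a single fixed field, so that the orders in its ring of integers with class number at most $m$ form a finite, effectively enumerable list.

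For the inductive step, reorder so that $|\Delta_n| = \max_i|\Delta_i|$; using the $K_*$-hypothesis together with a pigeonholing argument, we may arrange that $\Q(\sqrt{\Delta_n}) \neq K_*$. The central move is to apply a Galois automorphism $\sigma \in \Gal(\alg/\Q)$ that fixes $x_1, \ldots, x_{n-1}$ but moves $x_n$. Such a $\sigma$ exists precisely when $x_n \notin L' := \Q(x_1, \ldots, x_{n-1})$. Subtracting the $\sigma$-conjugate of the relation from the original yields
\[ a_n\bigl(x_n^m - \sigma(x_n)^m\bigr) = 0,\]
so $\sigma(x_n)/x_n$ is an $m$-th root of unity lying in the ring class field attached to $\Delta_n$. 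Because this is a CM abelian extension of $\Q(\sqrt{\Delta_n})$, its roots of unity are tightly constrained, reducing for $|\Delta_n|$ effectively large to $\pm 1$ (with at most small additional options if $\Q(\sqrt{\Delta_n})$ equals $\Q(i)$ or $\Q(\sqrt{-3})$). Varying $\sigma$ over $\Gal(L/L')$, whose size is controlled from below by Tatuzawa applied to $\Delta_n$, then forces the $\Gal(L/L')$-orbit of $x_n$ to be strictly larger than the allowed set of $m$-th roots of unity times $x_n$, delivering a contradiction.

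The main obstacle lies in the complementary case $x_n \in L'$, where no such $\sigma$ is available. Here one has only the divisibility relation $h(\Delta_n) \mid [L':\Q]$, which together with Tatuzawa constrains $|\Delta_n|$ solely in terms of the $|\Delta_i|$ for $i<n$ and does not by itself close the induction. My plan would be to substitute an explicit expression for $x_n^m$ in terms of $x_1, \ldots, x_{n-1}$ back into the relation, and then use Galois-equivariant bookkeeping combined with the complex-analytic size estimate $|x| \sim e^{\pi\sqrt{|\Delta|}}$ for the principal singular modulus of discriminant $\Delta$ to reduce either to a genuinely shorter linear-in-$m$-th-powers relation, to which the inductive hypothesis applies, or else to a size contradiction between the dominant and subdominant terms. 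Carrying out this reduction effectively, especially in the presence of the one possibly exceptional $K_*$-discriminant for which no effective class number lower bound is available, is the delicate step that I expect to be the crux of the proof.
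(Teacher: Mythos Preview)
Your inductive scheme has a genuine gap at its very first step in the inductive case, and the paper's argument is organised quite differently.

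\textbf{The pigeonholing fails.} You want to reorder so that $|\Delta_n|=\max_i|\Delta_i|$ and then, ``by pigeonholing'', force $\Q(\sqrt{\Delta_n})\neq K_*$. But the hypothesis of the theorem allows \emph{many} of the $x_i$ to share the single exceptional discriminant $\Delta_*$ with $\Q(\sqrt{\Delta_*})=K_*$, and nothing prevents $|\Delta_*|$ from being the maximum. This is exactly the case the theorem is designed to handle and which was not covered by prior work; you cannot reorder your way out of it. Once $\Delta_n=\Delta_*$, Tatuzawa gives you nothing for $\Delta_n$, and your lower bound on $|\Gal(L/L')|$ evaporates.

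\textbf{The Galois lower bound is unjustified even in the good case.} Suppose you do have $\Q(\sqrt{\Delta_n})\neq K_*$. Tatuzawa bounds $h(\Delta_n)=[\Q(x_n):\Q]$ from below, but the group you need to be large is $\Gal(L'(x_n)/L')$, whose order is $[L'(x_n):L']$. This divides $h(\Delta_n)$ but can be far smaller, precisely because $\Q(x_n)$ may sit largely inside $L'=\Q(x_1,\ldots,x_{n-1})$. Your argument that the orbit of $x_n$ is forced to exceed the supply of $m$-th roots of unity therefore does not go through. And in the complementary ``obstacle'' case $x_n\in L'$ you concede you have only a sketch; the substitution idea does not visibly produce a shorter relation (already for $n=2$, $m=1$ it just returns the original equation).

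\textbf{How the paper proceeds instead.} The paper does not attempt a bare-hands induction. It first invokes Binyamini's effective Andr\'e--Oort theorem for $\C^n$, which immediately and uniformly bounds every $|\Delta_i|$ with $\Q(\sqrt{\Delta_i})\neq K_*$ by an effective $c_1(m,n)$; this is a deep input you do not mention. What remains is a subset of the $x_i$, say $x_1,\ldots,x_k$, all of the \emph{same} exceptional discriminant $\Delta_*$. The key new lemma (Theorem~\ref{thm:field}, proved via dominant-singular-modulus size estimates and a vanishing-determinant argument, essentially your ``size contradiction'' idea made precise) shows that $[K_*(x_1,\sum_{i\le k}a_ix_i^m):K_*(\sum_{i\le k}a_ix_i^m)]\le 2k$ once $|\Delta_*|$ is mildly large. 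Since $\sum_{i\le k}a_ix_i^m$ lies in the field generated by the already-bounded $x_{k+1},\ldots,x_n$, this yields an effective upper bound on $h(\Delta_*)$, and then the unconditional Gross--Zagier lower bound (not Tatuzawa) finishes. The crucial point you are missing is that one must first \emph{effectively} dispose of the non-exceptional discriminants via Binyamini before any degree-counting on the exceptional side can close.
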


	An effective bound, depending only on $m, n$, for all those discriminants $\Delta_i$ such that $\Q(\sqrt{\Delta_i}) \neq K_*$ is given by a result of Binyamini \cite[Theorem~1]{Binyamini19}. If there is at most one $i$ such that $\Q(\sqrt{\Delta_i}) = K_*$, then $\max \{ \lvert \Delta_1 \rvert, \ldots, \lvert \Delta_n \rvert\}$ is effectively bounded in terms of $m, n$ by \cite[Corollary 1]{Binyamini19}. Our Theorem~\ref{thm:effpowers} improves on these prior results by allowing any number of the discriminants $\Delta_1, \ldots, \Delta_n$ to be such that $\Q(\sqrt{\Delta_i}) = K_*$, provided that all these exceptional $\Delta_i$ are themselves equal to one another.

The second main result of this paper shows that, under a more restrictive condition on the discriminants involved, we can make the constant $c(m, n)$ uniform in $m$ and also write down this constant explicitly.
	
	\begin{thm}\label{thm:unifeffpowers}
		Let $n \in \Z_{>0}$. There exist explicit constants $c_1(n), c_2(n)$ with the following properties: 
		
		Let $x_1, \ldots, x_n$ be pairwise distinct singular moduli such that
		\[ a_1 x_1^m + \ldots + a_n x_n^m = b\]
		for some $m \in \Z_{>0}$ and $a_1, \ldots, a_n \in \Q \setminus \{0\}$ and $b \in \Q$.
		Write $\Delta_i$ for the discriminant of $x_i$. 
		\begin{enumerate}
			\item 	If	$k \in \{1, \ldots, n\}$ is such that $\Q(\sqrt{\Delta_k}) \neq K_*$ and
			\[  \quad \# \left \{ \Delta_i :  \, i \in \left \{1, \ldots, n\right \} \mbox{ such that } \Q\left(\sqrt{\Delta_i}\right) = \Q\left(\sqrt{\Delta_k}\right) \right \} = 1,\]
		then $\lvert \Delta_k \rvert  \leq c_1(n)$.
			\item If, for every	$k \in \{1, \ldots, n\}$,
			\[ \quad  \# \left \{ \Delta_i :  \, i \in \left \{1, \ldots, n\right \} \mbox{ such that } \Q\left(\sqrt{\Delta_i}\right) = \Q\left(\sqrt{\Delta_k}\right) \right \} = 1,\]
			then $ \max \{ \lvert \Delta_i \rvert : i=1, \ldots, n\} \leq c_2(n)$.
		\end{enumerate}
	\end{thm}

An explicit form for the constant $c_1(n)$ is given in Proposition~\ref{prop:step4}, while an explicit form for the constant $c_2(n)$ appears at the end of Section~\ref{subsec:unipf}. 
	
	As steps in the proofs of Theorems~\ref{thm:effpowers} and Theorem~\ref{thm:unifeffpowers}, we also prove the following two results, which may have some independent interest. The first of them is also used in the proof of Theorem~\ref{thm:trip}.
	
	\begin{thm}\label{thm:equal}
Let $n \in \Z_{>0}$. Let $x_1, \ldots, x_n$ be  pairwise distinct singular moduli which are all of discriminant $\Delta$. Denote by $h(\Delta)$ the class number of the imaginary quadratic order of discriminant $\Delta$. Let $K = \Q(\sqrt{\Delta})$. If
\[a_1 x_1^m + \ldots + a_n x_n^m \in K\]
for some $a_1, \ldots, a_n \in K \setminus \{0\}$ and $m \in \Z_{>0}$, then either
\[ \lvert \Delta \rvert^{1/2} \leq \frac{1}{\pi} \left(\left(2n+3\right) \log\left(n+1\right) -2n +4\right),\]
or $h(\Delta) = n$ and $a_1 = \ldots = a_n$.
	\end{thm}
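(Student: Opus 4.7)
The plan is to exploit, via CM theory, the simply transitive Galois action of $G := \Gal(H_\Delta/K)$ on the set of all singular moduli of discriminant $\Delta$, where $H_\Delta$ is the ring class field of the imaginary quadratic order of discriminant $\Delta$. Label these singular moduli as $\{y_\sigma : \sigma \in G\}$ with $y_\sigma = \sigma(y_e)$ for some fixed base $y_e$, and write $x_i = y_{\sigma_i}$ for distinct $\sigma_i \in G$. Since $a_1, \ldots, a_n, b$ all lie in $K$ and are therefore fixed by $G$, applying each $\tau \in G$ to the equation produces a new valid instance $\sum_{i=1}^n a_i \tau(x_i)^m = b$. The strategy is to combine these $h(\Delta)$ translated equations via Fourier analysis on the abelian group $G$ to force either the special case stated in the theorem or a non-trivial linear dependence among Galois conjugates of $y_e^m$; the latter is impossible for $|\Delta|$ large by standard $q$-expansion estimates for the $j$-function.

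Concretely, define $f : G \to K$ by $f(\sigma_i) = a_i$ for $i \in \{1, \ldots, n\}$ and $f(\sigma) = 0$ otherwise, so the equation reads $\sum_\sigma f(\sigma)\, y_\sigma^m = b$. Performing Fourier analysis on $G$ (after extending scalars to $L := K(\mu_{h(\Delta)})$ if needed) and combining the Galois-translated equations yields
\[
\hat f(\chi) \cdot S_\chi \;=\; 0 \qquad \text{for every non-trivial character } \chi \text{ of } G,
\]
where $S_\chi := \sum_\sigma \chi(\sigma)\, y_\sigma^m$ and $\hat f(\chi) := \sum_\sigma \bar\chi(\sigma) f(\sigma)$. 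If $\hat f(\chi) = 0$ for every non-trivial $\chi$, then $f$ is constant on $G$; because $f$ takes the nonzero value $a_i$ precisely on $\{\sigma_1, \ldots, \sigma_n\}$ and vanishes elsewhere, this forces $\{\sigma_1, \ldots, \sigma_n\} = G$, i.e.\ $n = h(\Delta)$, and $a_1 = \ldots = a_n$, which is the alternative conclusion of the theorem. Otherwise some non-trivial $\chi$ satisfies $S_\chi = 0$, yielding a non-trivial $L$-linear relation $\sum_{\sigma \in G} \chi(\sigma)\, y_\sigma^m = 0$ whose coefficients have absolute value $1$.

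To convert such a relation into a bound on $|\Delta|$, I would appeal to the usual analytic estimates. The unique reduced binary quadratic form of discriminant $\Delta$ with leading coefficient $A = 1$ picks out a dominant singular modulus $y_* \in \{y_\sigma\}$ satisfying $|y_*| \geq e^{\pi\sqrt{|\Delta|}} - c_0$ for an explicit absolute constant $c_0$, while every other reduced form has $A \geq 2$ and hence $|y_\sigma| \leq e^{\pi\sqrt{|\Delta|}/2} + c_0$ for $y_\sigma \neq y_*$. Isolating $y_*^m$ in the relation above and applying the triangle inequality yields
\[
\bigl(e^{\pi\sqrt{|\Delta|}} - c_0\bigr)^m \;\leq\; (h(\Delta) - 1)\bigl(e^{\pi\sqrt{|\Delta|}/2} + c_0\bigr)^m,
\]
which gives an effective upper bound for $|\Delta|^{1/2}$ in terms of $h(\Delta)$.

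The principal obstacle is sharpening this analytic step so that the resulting bound depends only on $n$ (with the explicit coefficients $(2n+3)\log(n+1) - 2n + 4$) rather than on $h(\Delta)$. A natural route is the Donoho--Stark uncertainty principle applied to the abelian group $G$: since $f$ is supported on $n$ elements, $\hat f$ is supported on at least $h(\Delta)/n$ characters, so at least $h(\Delta)/n - 1$ non-trivial $\chi$ yield $S_\chi = 0$. This system of simultaneous relations should allow the multiplicity $(h(\Delta) - 1)$ in the triangle inequality to be replaced by a quantity depending on $n$ alone, and combined with careful bookkeeping of the explicit constants in the $q$-expansion of $j$ should yield the stated numerical bound.
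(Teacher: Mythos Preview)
Your Fourier-analytic approach is genuinely different from the paper's and is essentially correct, but you have misidentified the obstacle. The inequality you derive,
\[
\bigl(e^{\pi\sqrt{|\Delta|}} - c_0\bigr)^m \;\leq\; (h(\Delta) - 1)\bigl(e^{\pi\sqrt{|\Delta|}/2} + c_0\bigr)^m,
\]
does \emph{not} require any uncertainty principle to finish. Simply take $m$-th roots (using $(h-1)^{1/m}\le h-1$) and insert the elementary upper bound $h(\Delta)\le \frac{1}{\pi}|\Delta|^{1/2}(2+\log|\Delta|)$ (Proposition~\ref{prop:upperclass} in the paper). The resulting inequality in $|\Delta|$ alone forces $e^{\pi\sqrt{|\Delta|}/2}\ll \sqrt{|\Delta|}\log|\Delta|$, hence bounds $|\Delta|$ by an \emph{absolute} constant, independent of both $n$ and $m$. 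So your method actually proves something stronger than the stated theorem (an absolute bound rather than one growing like $n\log n$), and the Donoho--Stark detour is unnecessary. The paper itself uses exactly this ``absorb $h(\Delta)$ into the exponential'' trick in the proof of Theorem~\ref{thm:field}, in the step where a relation with $k=h(\Delta)$ equal coefficients is shown to be impossible.

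For comparison, the paper's own proof avoids Fourier analysis entirely. In the case $h(\Delta)\ge n+1$, it selects $n+1$ specific Galois translates $(x_{1,i},\ldots,x_{n,i})$ of $(x_1,\ldots,x_n)$ such that $x_{k,i}$ is the dominant singular modulus precisely when $k=i$; the $n+1$ equations $\sum_k a_k x_{k,i}^m=b$ then force an $(n+1)\times(n+1)$ determinant to vanish. In the Leibniz expansion, one term is a product of $n$ dominants, and the remaining $(n+1)!-1$ terms each lose at least one dominant factor; this yields the stated explicit constant $(2n+3)\log(n+1)-2n+4$ via Stirling. The case $h(\Delta)=n$ is reduced to the previous one by subtracting $a_1\sum x_i^m\in K$ (Newton's identities). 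Your character argument handles both cases uniformly and more elegantly, at the cost of a less explicit (though in fact tighter) constant.
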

	
	The second is a result on the fields generated by linear combinations of powers of singular moduli of the same discriminant.
	
	\begin{thm}\label{thm:field}
Let $n \in \Z_{>0}$. Let $x_1, \ldots, x_n$ be  pairwise distinct singular moduli which are all of discriminant $\Delta$. Let $K = \Q(\sqrt{\Delta})$. Then either
\[ \lvert \Delta \rvert^{1/2} \leq \frac{1}{\pi} \left(\left(4n+3\right) \log\left(2n+1\right) -4n + 4\right),\]
or
\[ \left[K\left(x_1, a_1 x_1^m + \ldots + a_n x_n^m\right) : K\left(a_1 x_1^m + \ldots + a_n x_n^m\right) \right] \leq n\]
for all $a_1, \ldots, a_n \in K \setminus \{0\}$ and every $m \in \Z_{>0}$.
	\end{thm}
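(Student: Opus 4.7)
The plan is to reformulate the statement via Galois theory and reduce to Theorem~\ref{thm:equal}. Let $L = K(x_1)$ be the ring class field of discriminant $\Delta$; since all singular moduli of discriminant $\Delta$ form one Galois orbit over $K$, $L = K(x_i)$ for every $i$, so $L$ contains each $x_i$ and the element $y := a_1 x_1^m + \ldots + a_n x_n^m$. Putting $G = \Gal(L/K)$, the group $G$ acts regularly on the set $S$ of all singular moduli of discriminant $\Delta$, which contains $\{x_1, \ldots, x_n\}$. Since $L = K(x_1, y)$, the target inequality $[K(x_1, y) : K(y)] \leq n$ is equivalent to $\lvert \mathrm{Stab}_G(y) \rvert \leq n$.

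To control this stabiliser, I would introduce the subgroup
\[ T = \{ \tau \in G : \tau(\{x_1, \ldots, x_n\}) = \{x_1, \ldots, x_n\}, \, \tau(x_i) = x_j \Rightarrow a_i = a_j \} \]
of $G$. A direct check gives $T \subseteq \mathrm{Stab}_G(y)$, and since $T$ acts freely on $\{x_1, \ldots, x_n\}$ (as $G$ acts regularly on $S$), $\lvert T \rvert$ divides $n$, hence $\lvert T \rvert \leq n$. Arguing by contrapositive, if $\lvert \mathrm{Stab}_G(y) \rvert > n$ then $T$ is a proper subgroup of $\mathrm{Stab}_G(y)$, so one picks $\sigma \in \mathrm{Stab}_G(y) \setminus T$ and rewrites $\sigma(y) = y$ as $\sum_{i=1}^n a_i(\sigma(x_i)^m - x_i^m) = 0$. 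Grouping by common singular modulus and discarding zero coefficients gives a relation $\sum_{j=1}^N c_j s_j^m = 0$ with $1 \leq N \leq 2n$, distinct $s_j \in S$, and $c_j \in K \setminus \{0\}$; the upper bound $N \leq 2n$ is immediate, while a classification of which coefficients can cancel, combined with $\sigma \notin T$, forces $N \geq 1$.

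Applying Theorem~\ref{thm:equal} to this relation either yields $\lvert \Delta \rvert^{1/2} \leq \frac{1}{\pi}((2N+3)\log(N+1) - 2N + 4)$, which by monotonicity and $N \leq 2n$ is at most the asserted constant, or forces $h(\Delta) = N \leq 2n$ with all $c_j$ equal to some $c \in K \setminus \{0\}$. In the latter sub-case the relation becomes $c \cdot \mathrm{Tr}_{L/K}(x_1^m) = c \sum_{s \in S} s^m = 0$, which the standard Fourier-expansion estimates for $j$ at CM points rule out once $\lvert \Delta \rvert$ exceeds a much smaller explicit threshold, since the contribution of the dominant singular modulus (of size $\approx e^{\pi \lvert \Delta \rvert^{1/2}}$) overwhelms the remaining at most $2n - 1$ contributions (each of size $\lesssim e^{\pi \lvert \Delta \rvert^{1/2}/2}$). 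The main obstacle I anticipate is the combinatorial bookkeeping---verifying $T \subseteq \mathrm{Stab}_G(y)$, bounding $N$, and classifying when grouped coefficients can vanish; once the non-trivial relation is extracted, Theorem~\ref{thm:equal} handles the rest.
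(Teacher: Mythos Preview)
Your proposal is correct and follows essentially the same route as the paper: reduce via Galois theory to a nontrivial $K$-linear relation among at most $2n$ distinct $m$th powers of singular moduli of discriminant $\Delta$, apply Theorem~\ref{thm:equal}, and dispose of the residual case $h(\Delta)=N$ with equal coefficients via the dominant-term estimate. The paper avoids your auxiliary subgroup $T$ by observing directly that, since $G$ acts regularly on the singular moduli of discriminant $\Delta$, exactly $n$ elements of $G$ send $x_1$ into $\{x_1,\ldots,x_n\}$, so $|H|>n$ already yields a $\sigma\in H$ with $\sigma(x_1)\notin\{x_1,\ldots,x_n\}$, which immediately forces at least two surviving terms after cancellation.
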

	
	We emphasise that Theorems~\ref{thm:equal} and~\ref{thm:field} apply to all discriminants $\Delta$, including those with $\Q(\sqrt{\Delta}) = K_*$. Note also that Theorems~\ref{thm:equal} and \ref{thm:field} are uniform in the exponent $m$, as well as in $a_1, \ldots, a_n$. For discussion of whether such uniformity in $m$ may also hold in Theorem~\ref{thm:ineffpowers}, see Section~\ref{subsec:dep}.
	
	The final main result of this paper is a completely explicit version of Theorem~\ref{thm:ineffpowers} for the case where $m= d=1$ and $n=3$. The analogous result for $n=2$ is due to Allombert, Bilu, and Pizarro-Madariaga \cite[Theorem~1.2]{AllombertBiluMadariaga15}. 
	
	\begin{thm}\label{thm:trip}
		Let $x, y, z$ be pairwise distinct singular moduli and $A, B, C \in \Q \setminus \{0\}$. Then
		\[ Ax + By + Cz \in \Q\]
		if and only if (up to permuting $x, y, z$) one of the following holds:
		\begin{enumerate}
			\item $x, y, z \in \Q$;
			\item	\begin{enumerate}
				\item $x \in \Q$,
				\item $\Q(y) = \Q(z)$,
				\item $[\Q(y) : \Q] = [\Q(z) : \Q] = 2$, and
				\item $B/C = -(z-z')/(y-y')$, where $y', z'$ are the unique non-trivial Galois conjugates of $y, z$ over $\Q$ respectively;
			\end{enumerate}
			\item 
			\begin{enumerate}
				\item $\Q(x) = \Q(y) = \Q(z)$, 
				\item $[\Q(x) : \Q] = [\Q(y) : \Q] = [\Q(z) : \Q] = 2$, and
				\item writing $x', y', z'$ for the unique non-trivial Galois conjugates over $\Q$ of $x, y, z$ respectively, we have that
				\[A = -\frac{B(y-y')+C(z-z')}{x-x'};\]
			\end{enumerate}
			\item 
			\begin{enumerate}
				\item $[\Q(x) : \Q] = [\Q(y) : \Q] = [\Q(z) : \Q] = 3$,
				\item $x, y, z$ are all conjugate over $\Q$, and
				\item $A= B = C$;
			\end{enumerate}
			\item \begin{enumerate}
				\item $\Q(x) \subset \Q(y) = \Q(z)$,
				\item $[\Q(x) : \Q] = 2$ and $[\Q(y) : \Q] = [\Q(z) : \Q] = 4$,
				\item $y, z$ are conjugate over $\Q$, and
				\item writing $x'$ for the unique non-trivial Galois conjugate of $x$ over $\Q$ and $v, w$ for the other two Galois conjugates of $y, z$ over $\Q$, we have that
				\[\frac{A}{B} = \frac{A}{C} = - \frac{(y+z) - (v+w)}{x - x'}.\]	
			\end{enumerate}
		\end{enumerate}
	\end{thm}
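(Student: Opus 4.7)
The ``if'' direction is a routine Galois-theoretic verification: in each of the five cases, the given constraints on $A,B,C$ and on the Galois conjugates of $x,y,z$ imply that $Ax+By+Cz$ is fixed by every element of $\Gal(\tilde L/\Q)$, where $\tilde L$ is the Galois closure of $\Q(x,y,z)$, hence lies in $\Q$. For instance, in case~(3) the non-trivial element of $\Gal(\Q(x)/\Q)$ acts as $(x,y,z)\mapsto(x',y',z')$, and invariance rearranges into the displayed formula for $A$.

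For the ``only if'' direction, I would first reduce on how many of $x,y,z$ lie in $\Q$. All three rational is case~(1); exactly two would force the third to be rational, contradicting distinctness; and if exactly one, say $x$, is rational, then $By+Cz\in\Q$, so the $n=2$ classification of Allombert, Bilu and Pizarro-Madariaga \cite[Theorem~1.2]{AllombertBiluMadariaga15} yields case~(2). Henceforth I assume that $h(\Delta_x),h(\Delta_y),h(\Delta_z)\geq 2$.

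I would then stratify by the multiset $\{\Delta_x,\Delta_y,\Delta_z\}$. If all three discriminants are equal to a common $\Delta$, then Theorem~\ref{thm:equal} with $n=3$, $m=1$ gives either $|\Delta|^{1/2}\leq(9\log 4-2)/\pi<3.4$ (impossible, since no discriminant with $|\Delta|\leq 11$ has class number at least $2$) or $h(\Delta)=3$ and $A=B=C$, producing case~(4). If exactly two discriminants coincide, say $\Delta_y=\Delta_z=\Delta\neq\Delta_x$, then $y,z$ are Galois conjugates; applying Theorem~\ref{thm:field} with $n=2$ either bounds $|\Delta|$ effectively or yields $[K(y,By+Cz):K(By+Cz)]\leq 2$ for $K=\Q(\sqrt{\Delta})$, and combining this with $By+Cz=b-Ax\in\Q(x)$, a case split on $h(\Delta)$ and $h(\Delta_x)$ (using Galois elements that swap $y,z$ or permute a larger orbit when $h(\Delta)\geq 3$) yields case~(5) when $h(\Delta_x)=2$ and $h(\Delta)=4$, falls into case~(3) when $h(\Delta)=h(\Delta_x)=2$ and $\Q(x)=\Q(y)$, and rules out the remaining configurations. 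Finally, if all three discriminants are distinct, then for any $\sigma\in\Gal(\tilde L/\Q)$ fixing two of $x,y,z$ the invariance $\sigma(Ax+By+Cz)=Ax+By+Cz$ forces $\sigma$ to fix the third, giving the symmetric inclusions $\Q(x)\subseteq\Q(y,z)$, $\Q(y)\subseteq\Q(x,z)$, $\Q(z)\subseteq\Q(x,y)$; since singular moduli cannot generate three distinct quadratic subfields of a biquadratic extension compatibly with a nontrivial rational relation (a direct expansion in any basis of the biquadratic field forces some coefficient to vanish), these inclusions together with $h(\Delta_i)\geq 2$ force $\Q(x)=\Q(y)=\Q(z)$ to be a common quadratic field, yielding case~(3).

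The main obstacle, I anticipate, will be the ``exactly two discriminants equal'' step, where the interplay between the dihedral Galois groups of the distinct ring class fields $H_\Delta$ and $H_{\Delta_x}$ must be carefully unwound; in particular, ruling out $h(\Delta)=3$ and $h(\Delta)\geq 5$ requires not only Theorem~\ref{thm:field} to bound $|\Delta|$, but also a direct enumeration of the finitely many small-discriminant triples (using, for example, the class polynomial tables in \cite[\S13]{Cox89} together with numerical verification of the corresponding linear relations) to rule out sporadic exceptions.
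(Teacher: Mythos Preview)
Your proposal has a genuine gap in the step where all three discriminants are distinct. The symmetric inclusions $\Q(x)\subseteq\Q(y,z)$, etc., only yield $\Q(x,y)=\Q(y,z)=\Q(x,z)=\Q(x,y,z)$; they do \emph{not} force $\Q(x)=\Q(y)=\Q(z)$, let alone that this common field is quadratic. Your biquadratic remark addresses only one very special configuration. In the paper the reduction to the two field patterns ``$\Q(x)=\Q(y)=\Q(z)$'' or ``$\Q(x)\subsetneq\Q(y)=\Q(z)$ with index $2$'' is obtained instead from the classification (Theorem~\ref{thm:fields}, due to Bilu--Faye--Zhu and Faye--Riffaut) of when $\Q(x+\alpha y)\subsetneq\Q(x,y)$ for singular moduli $x,y$; this is the key external input you are missing. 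Even once the field pattern is established, the equal-field case can have degree $\geq 3$ with $\Delta_x,\Delta_y,\Delta_z$ not all equal (via Propositions~\ref{prop:difffund} and~\ref{prop:samefund}), so one does not land directly in case~(3).

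A second, structural gap is that your sketch has no mechanism to bound $|\Delta|$ in the residual cases. Theorem~\ref{thm:field} gives information about the index $[K(y,By+Cz):K(By+Cz)]$, but this by itself does not rule out large discriminants when $\Q(x)=\Q(y)=\Q(z)$ has degree $\geq 3$ or when $\Q(x)\subsetneq\Q(y)=\Q(z)$ with $[\Q(x):\Q]\geq 3$. The paper's method here is a $4\times4$ determinant argument: one chooses four $\Q$-conjugates $(x_i,y_i,z_i)$ so that the vanishing determinant in \eqref{eq:det} has a single dominating term (controlled via Propositions~\ref{prop:bd}, \ref{prop:denom}, and \ref{prop:sep}), forcing explicit numerical bounds on $|\Delta|$ in each of the discriminant configurations allowed by Propositions~\ref{prop:difffund}--\ref{prop:samefundsub}. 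The remaining finite list is then eliminated by a PARI computation. Your final paragraph gestures at enumeration, but without the determinant bounds there is no finite list to enumerate.
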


Note that it is straightforward to compute the list of all the triples of singular moduli satisfying one of the conditions (1)--(5) in Theorem~\ref{thm:trip}. The proof of Theorem~\ref{thm:trip} involves some computations in PARI \cite{PARI24}. These computations were carried out using a standard desktop computer\footnote{With an Intel Core i5-10600 processor and 16 GB RAM.}; the scripts are available from: \url{https://github.com/guyfowler/sums_of_powers}. 
	
	\subsection{Related results}
	
		Riffaut \cite{Riffaut19} and, jointly, Luca and Riffaut \cite{LucaRiffaut19} proved an effective (indeed, completely explicit) version \cite[Theorem~1.3]{LucaRiffaut19} of Theorem~\ref{thm:ineffpowers} in the case where $d = 1$ and $n = 2$. For $d=1$ and $n = 3$, an explicit version of Theorem~\ref{thm:ineffpowers} in the special case that $\lvert a_1 \rvert = \lvert a_2 \rvert = \lvert a_3 \rvert$ was proved by the author in a previous paper \cite[Theorem~1.1]{Fowler23}. 
	
	Binyamini \cite{Binyamini19} proved a version of Theorem~\ref{thm:ineffpowers} which is effective, but not uniform in the height of the coefficients $a_1, \ldots, a_n, b$. He proved \cite[Corollary~4]{Binyamini19} that if $m, n \in \Z_{>0}$ and $x_1, \ldots, x_n$ are pairwise distinct singular moduli of respective discriminants $\Delta_1, \ldots, \Delta_n$ such that 
\begin{align}\label{eq:lin}
	a_1 x_1^m + \ldots + a_n x_n^m = b \mbox{ for some }
	a_1, \ldots, a_n \in \alg \setminus \{0\} \mbox{ and } b \in \alg,
\end{align}
then
\[ \max \left \{\left\lvert \Delta_1 \right\rvert, \ldots, \left\lvert \Delta_n \right\rvert \right\}\leq c(m, n, d, h),\]
where $c(m, n, d, h)$ is an effective constant which depends only on $m, n,$ 
\[ d = [\Q(a_1, \ldots, a_n, b) : \Q],\]
and also
\[ h = H((a_1, \ldots, a_n, b)).\]
Here $H(\cdot)$ denotes the absolute multiplicative Weil height, see e.g. \cite[\S1.5]{BombieriGubler06}. In the $m=1$ case, the same result was proved independently by Bilu and K\"uhne \cite[Lemma~3.1]{BiluKuhne20}, who even gave an explicit form \cite[(42)]{BiluKuhne20} for the constant $c(1, n, d, h)$. The dependence of the constant $c(m, n, d, h)$ on $h$ means that \cite[Corollary~4]{Binyamini19} and \cite[Lemma~3.1]{BiluKuhne20} do not, for example, give an effective bound on the $n$-tuples $(x_1, \ldots, x_n)$ of pairwise distinct singular moduli $x_1, \ldots, x_n$ which satisfy \eqref{eq:lin} with $a_1, \ldots, a_n, b \in \Q$. 

It is also worth noting that it is possible to obtain (ineffective) bounds on the number of maximal special subvarieties which are uniform across definable families of subvarieties and do not depend on (the degree of) the field of definition of the subvariety. Scanlon's results on automatic uniformity \cite[Theorem~4.2]{Scanlon04} imply that, for every $m, n \in \Z_{>0}$, there exists an ineffective constant $c(m, n) > 0$ with the following property: if $a_1, \ldots, a_n \in \C \setminus \{0\}$ and $b \in \C$, then there are at most $c(m, n)$ distinct $n$-tuples $(x_1, \ldots, x_n)$ of pairwise distinct singular moduli $x_1, \ldots, x_n$ such that
\[ a_1 x_1^m + \ldots + a_n x_n^m = b.\]
In the case where $m=1$ and $n=2$, Bilu, Luca, and Masser \cite[Theorem~1.1]{BiluLucaMasser17} proved that there are only (ineffectively) finitely many distinct, non-special linear subvarieties of $\C^2$ which contain at least $3$ distinct special points.

\subsection{Structure of this paper}
In Section~\ref{sec:prelim}, we recall some facts needed throughout the paper. In Section~\ref{sec:ineff}, we explain how Theorem~\ref{thm:ineffpowers} follows from the results of Pila \cite{Pila11} and Binyamini \cite{Binyamini19}. Theorems~\ref{thm:equal} and~\ref{thm:field} are proved in Sections~\ref{sec:same} and~\ref{sec:fields} respectively. The proofs of Theorems~\ref{thm:effpowers} and \ref{thm:unifeffpowers} are then carried out in Section~\ref{sec:pf}. Section~\ref{sec:singmod} contains some properties of singular moduli, which are then used for the proof of Theorem~\ref{thm:trip} in Section~\ref{sec:trip}.

\subsection{Acknowledgements} I would like to thank Sebastian Eterovi\'{c} and Ziyang Gao for helpful comments. The author has received funding from the European Research Council (ERC) under the European Union’s Horizon 2020 research and innovation programme (grant agreement no. 945714).

\section{Preliminaries}\label{sec:prelim}

\subsection{Properties of singular moduli}\label{subsec:singmods}

We collect here some well-known properties of singular moduli which we will need throughout the paper. 

Let $j \colon \h \to \C$ denote the modular $j$-function, where $\h$ is the complex upper half plane. A \textbf{singular modulus} is a complex number $j(\tau)$, where $\tau \in \h$ is such that $[\Q(\tau) : \Q] = 2$. The \textbf{discriminant} $\Delta$ of a singular modulus $j(\tau)$ is given by $\Delta = b^2-4ac$, where $a, b, c \in \Z$, not all zero, are such that $a \tau^2 + b \tau + c = 0$ and $\gcd(a, b, c) = 1$. In particular, $\Delta < 0$ and $\Delta \equiv 0, 1 \bmod 4$. Hence, $\lvert \Delta \rvert \geq 3$ always. 

Note that $\Q(\tau) = \Q(\sqrt{\Delta})$. The \textbf{fundamental discriminant} $D$ of $j(\tau)$ is defined to be the discriminant of the imaginary quadratic field $\Q(\tau) = \Q(\sqrt{\Delta})$. One has that $\Delta = f^2 D$ for some $f \in \Z_{>0}$.

Write $F_j$ for the standard fundamental domain for the action (by fractional linear transformations) of $\SL$ on $\h$, i.e.
\begin{align*}
	F_j = &\{ z \in \h : -\frac{1}{2} \leq \re z < \frac{1}{2} \mbox{ and } \lvert z \rvert > 1\}\\
	&\cup \{ z \in \h : -\frac{1}{2} \leq \re z \leq 0 \mbox{ and } \lvert z \rvert = 1\}.
\end{align*}
The $j$-function restricts to a bijection $F_j \to \C$. Therefore, for $\Delta < 0$ such that $\Delta \equiv 0, 1 \bmod 4$, the map
\[ (a, b, c) \mapsto j\left( \frac{-b + \lvert \Delta \rvert^{1/2} i}{2a}\right)\]
is a bijection between the set
\begin{align*} 
	T_\Delta = \{&(a, b, c) \in \Z^3 : b^2 - 4 ac = \Delta, \, \gcd(a,b,c) = 1,\\ &\mbox{ and either } -a<  b \leq a <c \mbox{ or } 0 \leq b \leq a =c\}
\end{align*}
and the set of singular moduli of discriminant $\Delta$. For each such $\Delta$, there exists a unique triple $(a, b, c) \in T_\Delta$ with $a=1$, given by
\[ (a, b, c) = \left(1, k, \frac{k^2-\Delta}{4}\right),\]
where $k \in \{0, 1\}$ is such that $k \equiv \Delta \bmod 2$; we call the corresponding singular modulus the \textbf{dominant singular modulus} of discriminant $\Delta$.

 If $x$ is a singular modulus of discriminant $\Delta$, then, see \cite[Lemma 9.3 \& Theorem 11.1]{Cox22}, the field $\Q(\sqrt{\Delta}, x)$ is a Galois extension of both $\Q(\sqrt{\Delta})$ and $\Q$ and
 \[ \Gal\left(\Q\left(\sqrt{\Delta}, x\right) / \Q\left(\sqrt{\Delta}\right)\right) \cong \cl (\Delta),\]
 where $\cl(\Delta)$ denotes the class group of the unique imaginary quadratic order of discriminant $\Delta$.
 
 Let $x_1, \ldots, x_n$ be all the distinct singular moduli of some discriminant $\Delta$. Then, by \cite[Theorem 11.1 \& Proposition 13.2]{Cox22}, the polynomial
 \[ H_\Delta(z) = \prod_{i=1}^n (z- x_i)\]
 has coefficients in $\Z$ and is irreducible over $\Q$ and over $\Q(\sqrt{\Delta})$.
 Hence, for every $i \in \{1, \ldots, n\}$, we have that
\[ [\Q(x_i) : \Q] = \left[\Q\left(\sqrt{\Delta}, x_i\right) : \Q\left(\sqrt{\Delta}\right)\right] = h(\Delta),\]
where $h(\Delta)$ denotes the class number of the imaginary quadratic order of discriminant $\Delta$.

The class numbers of small discriminants may be computed straightforwardly in PARI. We will make use of the following consequence of this.

\begin{lem}\label{lem:smallclass}
	Let $x$ be a singular modulus of discriminant $\Delta$. If $\lvert \Delta \rvert < 15$, then $x \in \Q$. If $\lvert \Delta \rvert < 39$, then $h(\Delta)  \leq 3$.
\end{lem}

We will also need the following bound on singular moduli.

\begin{prop}\label{prop:bd}
	Let $x$ be a singular modulus of discriminant $\Delta$ which corresponds to a triple $(a, b, c) \in T_\Delta$. Then
	\[ \exp\left(\frac{\pi \lvert \Delta \rvert^{1/2}}{a}\right) - 2079 \leq \lvert x \rvert \leq \exp\left(\frac{\pi \lvert \Delta \rvert^{1/2}}{a}\right) + 2079.\]
\end{prop}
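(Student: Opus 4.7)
The plan is to use the $q$-expansion of the $j$-function to isolate the dominant term $q^{-1}$ and then bound the tail by an absolute constant that works uniformly for all $\tau$ in the fundamental domain $F_j$.

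First I would unpack the hypothesis: since $(a,b,c) \in T_\Delta$, the singular modulus $x$ equals $j(\tau)$, where
\[ \tau = \frac{-b + \lvert\Delta\rvert^{1/2} i}{2a}. \]
Setting $q = e^{2\pi i \tau}$, one obtains immediately
\[ \lvert q^{-1} \rvert = \exp\!\left(\frac{\pi \lvert \Delta \rvert^{1/2}}{a}\right), \]
which is the exact expression appearing in the statement. Thus the proposition reduces to the uniform tail bound
\[ \lvert j(\tau) - q^{-1} \rvert \leq 2079. \]

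Next I would invoke the classical Fourier expansion $j(\tau) = q^{-1} + 744 + \sum_{n \geq 1} c_n q^n$, with nonnegative integer coefficients $c_n$ (so in particular $c_1 = 196884$), giving
\[ \lvert j(\tau) - q^{-1} \rvert \leq 744 + \sum_{n \geq 1} c_n \lvert q \rvert^n. \]
To control $\lvert q \rvert$, I use that $\tau \in F_j$. Indeed, from the inequalities $-a < b \leq a \leq c$ (or $0 \leq b \leq a = c$) defining $T_\Delta$, one gets $\lvert \Delta \rvert = 4ac - b^2 \geq 4a^2 - a^2 = 3a^2$, whence
\[ \im \tau = \frac{\lvert \Delta \rvert^{1/2}}{2a} \geq \frac{\sqrt{3}}{2}, \]
and therefore $\lvert q \rvert \leq e^{-\pi \sqrt{3}}$. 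This is the worst-case value, attained at the corner of $F_j$, and it is where the constant $2079$ is calibrated.

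The main obstacle is then the explicit numerical verification that
\[ 744 + \sum_{n \geq 1} c_n e^{-\pi \sqrt{3} n} \leq 2079. \]
For this I would compute the first few terms (say up to $n = 5$) directly from the known values of $c_n$ and bound the tail using the Petersson--Rademacher asymptotic $c_n = O(e^{4\pi\sqrt{n}}/n^{3/4})$, or more crudely the elementary estimate $c_n \leq e^{4\pi\sqrt{n}}$, which shows that $c_n e^{-\pi\sqrt{3} n}$ decays geometrically for $n$ past a small threshold. A short computation then yields the bound, and combining with the triangle inequality applied in both directions around $q^{-1}$ gives the two-sided estimate stated in the proposition.
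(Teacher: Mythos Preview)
Your proposal is correct and follows essentially the same route as the paper: write $x=j(\tau)$ with $\tau=\frac{-b+\lvert\Delta\rvert^{1/2}i}{2a}\in F_j$, note that $\lvert q^{-1}\rvert=\exp(\pi\lvert\Delta\rvert^{1/2}/a)$, and invoke the uniform bound $\lvert j(\tau)-q^{-1}\rvert\le 2079$ valid on $F_j$. The only difference is that the paper quotes this last inequality directly as \cite[Lemma~1]{BiluMasserZannier13} (in the equivalent form $\big\lvert\,\lvert j(z)\rvert-\exp(2\pi\im z)\,\big\rvert\le 2079$), whereas you sketch its proof via the $q$-expansion and the worst case $\lvert q\rvert=e^{-\pi\sqrt{3}}$; this is exactly how Bilu--Masser--Zannier obtain the constant $2079$.
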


\begin{proof}
Let $(a, b, c) \in T_\Delta$ be the triple  corresponding to $x$. Then $x = j(\tau)$, where
	\[ \tau = \frac{-b + \lvert \Delta \rvert^{1/2} i}{2 a} \in F_j. \]
	The result follows immediately, since Bilu, Masser, and Zannier \cite[Lemma~1]{BiluMasserZannier13} proved that if $z \in F_j$, then
	\[ \left\lvert \left\lvert j(z) \right\rvert - \exp(2 \pi \im z) \right\rvert \leq 2079.\qedhere\] 
\end{proof}

This bound has the following consequence.

\begin{lem}\label{lem:dombig}
	Let $x, y$ be distinct singular moduli of the same discriminant $\Delta$. Suppose that $x$ is dominant. Then
	\[\lvert y \rvert \leq \frac{6 \lvert x \rvert}{\exp\left(\frac{\pi \lvert \Delta \rvert^{1/2}}{2}\right)}.\]
\end{lem}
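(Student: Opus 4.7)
\textbf{Proof plan for Lemma~\ref{lem:dombig}.} The strategy is to apply Proposition~\ref{prop:bd} twice: once to $x$, to get a lower bound, and once to $y$, to get an upper bound. The key observation is that the dominant singular modulus is the unique one whose corresponding triple in $T_\Delta$ has first entry $a=1$; any other singular modulus of the same discriminant $\Delta$ must correspond to a triple with first entry $a' \geq 2$. This follows directly from the description of $T_\Delta$ given in Section~\ref{subsec:singmods}: the condition $a=1$ forces $b \in \{0,1\}$ with $b \equiv \Delta \pmod{2}$, which pins down a unique triple (once $|\Delta| \geq 7$, the condition $a < c$ is automatic).

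Introduce the abbreviation $u = \exp(\pi |\Delta|^{1/2}/2)$. Proposition~\ref{prop:bd} applied to $x$ with $a = 1$ gives the lower bound
\[ |x| \geq \exp(\pi |\Delta|^{1/2}) - 2079 = u^2 - 2079, \]
while applied to $y$ with $a' \geq 2$ it gives the upper bound
\[ |y| \leq \exp(\pi |\Delta|^{1/2}/a') + 2079 \leq u + 2079. \]
Substituting into the target inequality $|y| \exp(\pi |\Delta|^{1/2}/2) \leq 6 |x|$, it suffices to verify the elementary inequality
\[ (u + 2079)\, u \leq 6\,(u^2 - 2079), \qquad \text{i.e.,} \qquad 5 u^2 - 2079 u - 12474 \geq 0. \]

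The larger root of this quadratic in $u$ is just under $422$. Since the hypothesis requires two distinct singular moduli of discriminant $\Delta$, one has $h(\Delta) \geq 2$. A brief inspection of small discriminants shows that the smallest $|\Delta|$ with $h(\Delta) \geq 2$ is $|\Delta| = 15$; for this value, $u = \exp(\pi\sqrt{15}/2) > 438$, which already exceeds the root. Hence the quadratic inequality holds, and the lemma follows.

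\textbf{Main obstacle.} There is no real obstacle; the lemma is an elementary consequence of Proposition~\ref{prop:bd} once the $a$-value of each triple is identified. The only points requiring care are (i) the uniqueness of the dominant triple, which is immediate from the definition of $T_\Delta$, and (ii) the small-discriminant bookkeeping needed to confirm that the hypothesis of having two distinct singular moduli forces $|\Delta| \geq 15$, placing us safely above the quadratic's root.
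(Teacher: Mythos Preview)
Your proof is correct and follows essentially the same approach as the paper's: both apply Proposition~\ref{prop:bd} with $a=1$ for $x$ and $a\geq 2$ for $y$, then reduce to an elementary inequality verified at the minimal admissible value $\lvert\Delta\rvert=15$. The only cosmetic difference is that the paper bounds the ratio $\lvert y\rvert/\lvert x\rvert$ directly and evaluates a monotone expression at $\lvert\Delta\rvert=15$, whereas you rearrange into a quadratic in $u=\exp(\pi\lvert\Delta\rvert^{1/2}/2)$ and check that $u>438$ exceeds its larger root.
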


\begin{proof}
	There are at least two distinct singular moduli of discriminant $\Delta$, so $\lvert \Delta \rvert \geq 15$ by Lemma~\ref{lem:smallclass}. Since $x$ is dominant, $y$ is not dominant.  Thus, Proposition~\ref{prop:bd} implies that
	\begin{align*}
		\frac{\lvert y \rvert}{\lvert x \rvert} &\leq  \frac{1}{\exp\left(\frac{\pi \lvert \Delta \rvert^{1/2}}{2}\right)} \frac{1 + 2079 \exp\left(\frac{-\pi \lvert \Delta \rvert^{1/2}}{2}\right)}{1 - 2079 \exp\left(- \pi \lvert \Delta \rvert^{1/2}\right)}\\ 
		&\leq \frac{1}{\exp\left(\frac{\pi \lvert \Delta \rvert^{1/2}}{2}\right)} \frac{1 + 2079 \exp\left(\frac{-\pi \sqrt{15}}{2}\right)}{1 - 2079 \exp\left(- \pi \sqrt{15}\right)}\\ 
		&\leq \frac{6}{\exp\left(\frac{\pi \lvert \Delta \rvert^{1/2}}{2}\right)}.\qedhere
		\end{align*}
\end{proof}

\subsection{Effective bounds for the class number}\label{subsec:Tatuzawa}

 A theorem of Tatuzawa \cite[Theorem 1]{Tatuzawa51} and Dirichlet's class number formula together imply the following result. It shows that Siegel's \cite{Siegel35} ineffective lower bound for the class number of imaginary quadratic fields may be made effective, apart from at most one possible exceptional imaginary quadratic field.

\begin{thm}[{\cite[Theorem 1]{Tatuzawa51}}]\label{thm:tatfund}
	Let $\epsilon \in (0, 1/2)$. Let $K_1, K_2$ be imaginary quadratic fields with respective discriminants $D_1, D_2$. If 
	\[ h(D_1) < \frac{\epsilon}{10 \pi} \lvert D_1 \rvert^{\frac{1}{2} - \epsilon} \mbox{ and } h(D_2) < \frac{\epsilon}{10 \pi} \lvert D_2 \rvert^{\frac{1}{2} - \epsilon},\]
	then $K_1 = K_2$.
\end{thm}

In other words, for a fixed $\epsilon \in (0, 1/2)$, there is at most one imaginary quadratic field $K$ for which the bound
\begin{align}\label{eq:Tat}
	 h(D) \geq \frac{\epsilon}{10 \pi} \lvert D \rvert^{\frac{1}{2} - \epsilon}
	 \end{align}
is false, where $D$ denotes the discriminant of $K$.
It is possible (indeed, it would follow from GRH) that, for some $\epsilon \in (0, 1/2)$, the bound \eqref{eq:Tat} in fact holds for every imaginary quadratic field $K$. In such a case, the results of \cite{Binyamini19} would imply an effective version of Theorem~\ref{thm:ineffpowers}.

Throughout this paper, we fix $\epsilon_* = 1/12$ and denote by $K_*$ the unique imaginary quadratic field for which the corresponding bound \eqref{eq:Tat} is false. Denote by $D_*$ the discriminant of $K_*$. If no such field $K_*$ exists, then we adopt the notational convention that the inequalities $K \neq K_*$ and $D \neq D_*$ hold for every imaginary quadratic field $K$ with discriminant $D$. We have the following consequence of Tatuzawa's result.

\begin{prop}[{\cite[(17)]{BiluKuhne20}}]\label{prop:Tatbd}
	Let $\Delta < 0$ be such that $\Delta \equiv 0, 1 \bmod 4$ and let $K = \Q(\sqrt{\Delta})$. If $K \neq K_*$, then
	\[ h(\Delta) \geq \frac{37}{50000} \lvert \Delta \rvert^{5/12}.\]
\end{prop}

For discriminants $\Delta$ with $\Q(\sqrt{\Delta}) = K_*$, the known effective bounds for the class number are much weaker.

\begin{prop}\label{prop:fundbd}
	Let $D < 0$ be a fundamental discriminant. Then
	\[ h(D) \geq \frac{1}{\sqrt{42000}} (\log \lvert D \rvert)^{1/2}.\]
\end{prop}

\begin{proof}
	Oesterl\'{e} \cite[Th\'{e}or\`{e}me~1 \& \S5.1]{Oesterle85}, building on work of Goldfeld \cite{Goldfeld76} and Gross--Zagier \cite[Theorem~8.1]{GrossZagier86}, proved  that
	\[ h(D) \geq \frac{\log \lvert D \rvert}{7000} F(D),\]
	where
	\[ F(D) = \prod \left( 1 - \frac{\lfloor 2 \sqrt{p} \rfloor}{p+1}\right)\]
	and this product is taken over all the primes $p$ which divide $D$ except the largest. Let $\omega(D)$ denote the number of distinct prime divisors of $D$. So there are $\omega(D) - 1$ terms in the product defining $F(D)$. By the theory of genera, see e.g. \cite[Proposition~3.11 \& Theorem~6.1]{Cox22}, we have that
	\[ \lvert \cl(D)[2] \rvert = 2^{\omega(D) -1}.\]
	In particular, $2^{\omega(D) -1} \mid h(D)$. So $\omega(D) - 1 \leq v_2(h(D))$, where $ v_2(h(D))$ denotes the largest integer $k$ such that $2^k \mid h(D)$. Hence, there are at most $v_2(h(D))$ terms appearing in the product defining $F(D)$. 
	
	Observe that
	\[ 1 - \frac{\lfloor 2 \sqrt{p} \rfloor}{p+1} \geq \frac{1}{2}\]
	if $p \geq 11$. Hence,
	\[F(D) \geq \frac{1}{3} \frac{1}{4} \frac{1}{3} \frac{3}{8} \left(\frac{1}{2}\right)^{v_2(h(D)) - 4} = \frac{1}{6} 2^{-v_2(h(D))}.\]
	Since $h(D) \geq 2^{v_2(h(D))}$ by definition, we obtain that
	\[ h(D)^2 \geq \frac{\log \lvert D \rvert}{42000}. \qedhere\]
	
\end{proof}

\begin{prop}\label{prop:class}
	Let $\Delta < 0$ be such that $\Delta \equiv 0, 1 \bmod 4$ and let $k \in \Z_{> 0}$. If $h(\Delta) \leq k$, then 
	\[  \lvert \Delta \rvert^{1/2}  \leq 2k^2 e^{21000k^2}.\]
\end{prop}

\begin{proof}
	Write $\Delta = f^2 D$, where $D$ is the discriminant of $\Q(\sqrt{\Delta})$. Suppose that $h(\Delta) \leq k$.
	Assume first that $D \notin \{-3, -4\}$. The class number formula in \cite[Theorem~7.24]{Cox22} gives that
	\[ h(\Delta) = h(D) f \prod_{p \mid f} \left(1 - \left(\frac{D}{p}\right)\frac{1}{p}\right),\]
	where $(D/p)$ denotes the Kronecker symbol. Hence, $h(D) \mid h(\Delta)$ and
	\[ h(\Delta) \geq h(D) \varphi(f),\]
	where $\varphi(\cdot)$ denotes Euler's totient function. Hence,
	\[h(D) \leq k \mbox{ and } \varphi(f) \leq k.\]
	Proposition~\ref{prop:fundbd} then implies that
	\[ \lvert D \rvert \leq e^{42000k^2}, \]
	while the classical bound $\varphi(f) \geq \sqrt{f/2}$ implies that $f \leq 2k^2$. Hence,
	\[ \lvert \Delta \rvert^{1/2} \leq 2k^2 e^{21000k^2}.\]
	
	Now assume $D \in \{-3, -4\}$. Then $h(D) = 1$. In this case, the class number formula from \cite[Theorem~7.24]{Cox22} implies that
	\[ h(\Delta) \geq  \frac{\varphi(f)}{3}.\]
	Hence, $\varphi(f) \leq 3 k$ and so $f \leq 18 k^2$. Thus, clearly,
	\[ \lvert \Delta \rvert^{1/2} \leq 36k^2 \leq 2 k^2 e^{21000k^2}. \qedhere\]
\end{proof}

Finally, we will also need an upper bound for the class number. This bound is a straightforward consequence of Dirichlet's class number formula.

\begin{prop}\label{prop:upperclass}
	If $\Delta < 0$ is such that $\Delta \equiv 0, 1 \bmod 4$, then $h(\Delta) \leq \lvert \Delta \rvert^{2/3}$.
\end{prop}

\begin{proof}
	By \cite[Proposition~2.2]{Paulin16}, we have that
	\[ h(\Delta) \leq \frac{1}{\pi} \lvert \Delta \rvert^{1/2} (2 + \log \lvert \Delta \rvert).\]
	It then suffices to note that, 	for every $x>0$,
	\[ \frac{2 + \log x}{\pi} < x^{1/6}. \qedhere\]
\end{proof}

\section{Deducing Theorem~\ref{thm:ineffpowers}}\label{sec:ineff}

In this section, we explain how Theorem~\ref{thm:ineffpowers} may be deduced from the results of Pila \cite[Theorem~13.2]{Pila11} and Binyamini \cite[Corollary~4]{Binyamini19}. A direct proof of the $m = 1$ case of Theorem~\ref{thm:ineffpowers} was given by Pila \cite[Theorem~7.7]{Pila14a}.

For the definition of a special subvariety of $\C^n$, see e.g. \cite[Definition~1.3]{Pila11}, \cite[(1.1)]{Binyamini19}, or \cite[Definition~4.10]{Pila22}. For a subvariety $V \subset \C^n$, denote by $V^\mathrm{sp}$ the union of all the positive-dimensional special subvarieties of $V$. 

Let $m, n \in \Z_{>0}$. For $a = (a_1, \ldots, a_n) \in \C^n$ and $b \in \C$, let
\[ V_{a, b} = \left\{\left(z_1, \ldots, z_n\right) \in \C^n :a_1 z_1^m + \ldots + a_n z_n^m = b \right\}.\]

\begin{lem}\label{lem:specials}
	Let $m, n \in \Z_{>0}$. Let $x_1, \ldots, x_n$ be pairwise distinct singular moduli. Suppose that $a = (a_1, \ldots, a_n) \in (\alg \setminus \{0\})^n$ and $b \in \alg$ are such that
	\[ a_1 x_1^m + \ldots + a_n x_n^m = b.\]
	Then
	\[ (x_1, \ldots, x_n) \in V_{a, b} \setminus V_{a, b}^\mathrm{sp}.\]
\end{lem}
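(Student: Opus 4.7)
The inclusion $(x_1,\ldots,x_n) \in V_{a,b}$ is immediate from the defining hypothesis $a_1 x_1^m + \cdots + a_n x_n^m = b$, so the entire work lies in showing that $(x_1,\ldots,x_n) \notin V_{a,b}^{\mathrm{sp}}$. I would argue by contradiction: assume there exists a positive-dimensional special subvariety $S \subset V_{a,b}$ through $(x_1,\ldots,x_n)$, and derive an impossibility.

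The plan relies on the standard ``block'' description of special subvarieties of $\C^n$: every irreducible special subvariety corresponds to a partition $\{1,\ldots,n\} = B_1 \sqcup \cdots \sqcup B_r$, where each block $B_k$ either fixes its coordinates to specific singular moduli (contributing $0$ to the dimension) or is parametrised by a map $\h \to \C^{|B_k|}$ of the form $\tau_k \mapsto (j(g_i \tau_k))_{i \in B_k}$, with matrices $g_i \in \GL$ lying in pairwise distinct right cosets of $\SL \backslash \GL$ (contributing $1$ to the dimension). Since $\dim S \geq 1$, at least one block is of the parametrised type, and the several parametrised blocks are independent of one another. Substituting this description into the equation $\sum a_i z_i^m = b$ and using the independence of the block parameters $\tau_k$, the problem reduces to showing that, for every parametrised block $B$, the function
\[
F(\tau) \;=\; \sum_{i \in B} a_i\, j(g_i \tau)^m
\]
cannot be identically constant on $\h$.

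The easy case is $|B| = 1$: then $F(\tau) = a_j\, j(\tau)^m$ with $a_j \neq 0$ and $m \geq 1$, so $F$ is manifestly non-constant. The main obstacle, and genuine content of the lemma, lies in the case $|B| \geq 2$. Here I would carry out a cusp analysis: using Proposition~\ref{prop:bd} together with the identity $\im(g_i \tau) = \det(g_i)\im(\tau)/|c_i \tau + d_i|^2$, one computes the pole orders that each summand $a_i\, j(g_i \tau)^m$ contributes at the cusps of a common congruence subgroup for which every $j(g_i\tau)$ is a modular function. Because the $g_i$ lie in pairwise distinct cosets of $\SL \backslash \GL$, the Hecke translates $j(g_i \tau)$ have distinct (or, after grouping, $\C$-linearly independent) principal parts at some cusp, so no $\C$-linear combination of their $m$-th powers with nonzero coefficients can be constant. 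This yields the contradiction. An alternative, more conceptual route would be to invoke the Ax--Lindemann theorem for $j$ to conclude directly that a relation of the shape $\sum_{i \in B} a_i j(g_i \tau)^m \equiv c$ on $\h$ is incompatible with the $g_i$ giving distinct Hecke orbits. Either route closes the argument.
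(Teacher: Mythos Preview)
Your reduction to the non-constancy of $F(\tau)=\sum_{i\in B}a_ij(g_i\tau)^m$ on a single parametrised block is correct and is indeed the heart of the matter, but your route diverges from the paper's. The paper does not analyse the block structure of special subvarieties at all: it simply notes that $V_{a,b}$ is a \emph{hereditarily degree non-degenerate} hypersurface in the sense of \cite[Definition~2]{Binyamini19} and invokes \cite[Corollary~4]{Binyamini19}, which asserts that every positive-dimensional special subvariety of an hdnd hypersurface is cut out solely by equations $z_i=z_k$ and $z_l=\text{const}$. Pairwise distinctness of the $x_i$ then rules out the diagonals, forcing $S=\prod_{i\in I}\{x_i\}\times\C^{n-|I|}$, which visibly cannot sit inside $V_{a,b}$ since the $a_i$ are nonzero. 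All the modular-function analysis you propose is thus packaged inside Binyamini's black box; your approach is more self-contained but re-proves a special case of that corollary.

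On the execution: the cusp-analysis route can be made to work, but your claim that the $j(g_i\tau)$ have ``$\C$-linearly independent principal parts at some cusp'' is not quite right as stated. When several $g_i$ share the same slope $\alpha_i/\delta_i$ at the cusp $i\infty$, their principal parts are single terms differing only by roots of unity and are not linearly independent; the $m$-th powers even less so. What actually rescues the argument is a Vandermonde-type computation in the \emph{full} $q$-expansion of $j(g_i\tau)^m$, using that every Fourier coefficient of $j^m$ is nonzero --- one must look well beyond the principal part. Your alternative via Ax--Lindemann, by contrast, is a genuine dead end: the line $\{(g_1\tau,\ldots,g_k\tau):\tau\in\h\}\subset\h^k$ is already weakly special, so Ax--Lindemann imposes no constraint whatsoever on whether its $j$-image lies in the hypersurface $\sum a_iz_i^m=c$. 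Showing that it does not is precisely the content of the lemma and needs the direct computation (yours or Binyamini's).
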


\begin{proof}
	Clearly, 
	\[ (x_1, \ldots, x_n) \in V_{a, b}. \]
	Suppose that 
	\[(x_1, \ldots, x_n) \in S\]
	for some positive-dimensional special subvariety $S$ of $V_{a, b}$.	We show that this is impossible. 
	
	The subvariety $V_{a, b}$ is, in the terminology of \cite[Definition~2]{Binyamini19}, a hereditarily degree non-degenerate (hdnd) hypersurface. Hence, by \cite[Corollary~4]{Binyamini19}, the special subvariety $S$ may be defined by equations solely of the form $z_i = z_k$ and $z_l = x_l$. Since $x_1, \ldots, x_n$ are pairwise distinct, no non-trivial equations $z_i = z_k$ hold on $S$. Thus, up to reordering the coordinates,
	\[ S = \prod_{i \in I} \{x_i\} \times \C^{n-k}\]
	for some $k \in \{1, \ldots, n-1\}$ and $I \subset \{1, \ldots, n\}$ with $\lvert I \rvert = k$. So
	\[ \sum_{i \in I} a_i x_i^m + \sum_{i \in \{1, \ldots, n\} \setminus I} a_i z_i^m = b\]
	for all $z_i \in \C$, which is clearly absurd since the $a_i$ are non-zero.
\end{proof}

\begin{proof}[Proof of Theorem~\ref{thm:ineffpowers}]
Let $m, n \in \Z_{>0}$. Define $V_{a, b} \subset \C^n$ as above. Let
\[ \mathbb{V} = \{ (z, a, b) \in \C^n \times \C^n \times \C : z \in V_{a, b}\}.\]
View $\mathbb{V}$ as a definable family of subvarieties of $\C^n$ with fibres $V_{a, b}$. Note that the definition of $\mathbb{V}$ depends only on $m, n$.

By Pila's Uniform Andr\'e--Oort for $\C^n$ \cite[Theorem~13.2]{Pila11} applied to $\mathbb{V}$, for every $d \in \Z_{>0}$, there exists an ineffective constant $c(m, n, d)>0$ with the following property. Let $x_1, \ldots, x_n$ be singular moduli and write $\Delta_i$ for the discriminant of $x_i$. Let $a = (a_1, \ldots, a_n) \in (\alg \setminus \{0\})^n$ and $b \in \alg$. If
\[ (x_1, \ldots, x_n) \in V_{a, b} \setminus V_{a, b}^\mathrm{sp}\]
and 
\[ [\Q(a_1, \ldots, a_n, b) : \Q] \leq d,\]
then
\[ \max \{ \lvert \Delta_i \rvert : i=1, \ldots, n\} \leq c(m, n, d).\]
Theorem~\ref{thm:ineffpowers} thus follows by Lemma~\ref{lem:specials}.
\end{proof}

\subsection{Uniformity in Theorem~\ref{thm:ineffpowers}}\label{subsec:dep}
	Recall, from Section~\ref{subsec:singmods}, that the singular moduli of a given discriminant $\Delta$ form a complete set of Galois conjugates over $\Q$. Moreover, by Proposition~\ref{prop:class}, the number of distinct singular moduli of discriminant $\Delta$ is $\geq c_1 (\log \lvert \Delta \rvert)^{c_2}$ for some absolute effective constants $c_1, c_2>0$. Therefore, if $m, n$ are fixed, then $c(m, n, d) \to \infty$ as $d \to \infty$, where $c(m, n, d)$ is the constant in Theorem~\ref{thm:ineffpowers}. Similarly, if $m, d$ are fixed, then $c(m, n, d) \to \infty$ as $n \to \infty$.
	
	On the other hand, if $n, d$ are fixed, then it is not obvious what happens to the constant $c(m, n, d)$ as $m \to \infty$. Since singular moduli are algebraic, one cannot bound the discriminants associated to a special point lying on a general hypersurface in $\C^n$ solely in terms of $n$ and the minimal degree of a field of definition of the hypersurface. However, it may be possible to obtain bounds that are uniform in $m$ for the specific family of hypersurfaces considered in Theorem~\ref{thm:ineffpowers}, i.e.~those defined by equations of the form
	\[ a_1 x_1^m + \ldots + a_n x_n^m = b.\]

	Indeed, the constant $c(m, n, d)$ in Theorem~\ref{thm:ineffpowers} may be taken to be uniform in $m$ if either $n = 1$ or $(d, n) = (1, 2)$. If $x$ is a singular modulus of discriminant $\Delta$ such that
	\[a x^m = b\]
	for some $a, b \in \alg \setminus \{0\}$ and $m \in \Z_{>0}$, then $\lvert \Delta \rvert$ may be bounded solely in terms of $[\Q(a, b) : \Q]$, thanks to Proposition~\ref{prop:class} and the fact \cite[Lemma~2.6]{Riffaut19} that $\Q(x^m) = \Q(x)$. If $x_1, x_2$ are distinct singular moduli of respective discriminants $\Delta_1, \Delta_2$ such that
	\[ a_1 x_1^{m_1} + a_2 x_2^{m_2} \in \Q\]
	for some $a_1, a_2 \in \Q \setminus \{0\}$ and $m_1, m_2 \in \Z_{>0}$, then $\max \{\lvert \Delta_1 \rvert, \lvert \Delta_2 \rvert \} \leq 427$
	by \cite[Theorem~1.3]{LucaRiffaut19} (see also \cite[Theorem~1.5]{Riffaut19}). For $n = 3$, we have the previous result of the author \cite[Theorem~1.1]{Fowler23}: if $x_1, x_2, x_3$ are pairwise distinct singular moduli of respective discriminants $\Delta_1, \Delta_2, \Delta_3$ such that
	\[ a_1 x_1^m + a_2 x_2^m + a_3 x_3^m \in \Q\]
	for some $m \in \Z_{>0}$ and $a_1, a_2, a_3 \in \Q \setminus \{0\}$ with $\lvert a_1 \rvert = \lvert a_2 \rvert = \lvert a_3 \rvert$, then $\max \{\lvert \Delta_1 \rvert, \lvert \Delta_2 \rvert, \lvert \Delta_3 \rvert \} \leq 907$.

\section{Equations in singular moduli of the same discriminant}\label{sec:same}

In this section, we prove Theorem~\ref{thm:equal}.

	\begin{proof}[Proof of Theorem~\ref{thm:equal}]
		Let $n \in \Z_{>0}$. Let $x_1, \ldots, x_n$ be pairwise distinct singular moduli of discriminant $\Delta$. In particular, $h(\Delta) \geq n$. Write $K = \Q(\sqrt{\Delta})$. Suppose that
		\[ a_1 x_1^m + \ldots + a_n x_n^m = b\]
		for some $a_1, \ldots, a_n \in K \setminus \{0\}$, $b \in K$, and $m \in \Z_{>0}$. 
			
			Suppose first that $h(\Delta) \geq n+1$. The singular moduli of discriminant $\Delta$ form a complete set of Galois conjugates over $K$. Hence, there must exist $K$-conjugates
			\[(x_{1, i}, \ldots, x_{n, i})\]
			of $(x_1, \ldots, x_n)$, where $i \in \{1, \ldots, n+1\}$, with the property that:
	\[x_{k, i} \mbox{ is dominant } \iff k=i.\]
	Note that
	\[a_1 x_{1, i}^m + \ldots + a_n x_{n, i}^m = b\]
	for every $i \in \{1, \ldots, n+1\}$. Therefore, 
	\begin{align*}
		\begin{vmatrix}
			1 & \cdots & 1\\
			x_{1, 1}^m & \cdots & x_{1, n+1}^m\\
			\vdots & & \vdots\\
			x_{n, 1}^m & \cdots & x_{n, n+1}^m
		\end{vmatrix}
	=0.
		\end{align*}
	Expanding this determinant, we have that
	\[ \sum_{\sigma \in S_{n+1}} \mathrm{sgn}(\sigma) (x_{1, \sigma(2)} \cdots x_{n, \sigma(n+1)})^m = 0.\]
	Let $\tau$ be the unique element of $S_{n+1}$ such that $\tau(i+1) = i$ for every $i \in \{1, \ldots, n\}$. Then
	\begin{align}\label{eq:equal1}
		\lvert x_{1, 1} \cdots x_{n, n} \rvert^m = \left \lvert \sum_{\sigma \in S_{n+1} \setminus \{ \tau\}} \mathrm{sgn}(\sigma) (x_{1, \sigma(2)} \cdots x_{n, \sigma(n+1)})^m \right\rvert.
		\end{align}
	
	Note that
	\begin{align*}
		\lvert x_{1, 1} \cdots x_{n, n} \rvert = \lvert x_{1, 1} \rvert^n,
		\end{align*}
	since $x_{i, i}$ is the unique dominant singular modulus of discriminant $\Delta$ for every $i \in \{1, \ldots, n\}$. Observe also that if $\sigma \in S_{n+1} \setminus \{ \tau\}$, then $\sigma(i+1) \neq i$ for some $i \in \{1, \ldots, n\}$ and hence at least one of
	\[ x_{1, \sigma(2)}, \ldots, x_{n, \sigma(n+1)}\]
		is not dominant. Therefore, by Lemma~\ref{lem:dombig},
		\begin{align}\label{eq:equal2}
		&\left\lvert 	\sum_{\sigma \in S_{n+1} \setminus \{ \tau\}} \mathrm{sgn}(\sigma) (x_{1, \sigma(2)} \cdots x_{n, \sigma(n+1)})^m \right\rvert \nonumber\\ 
		\leq &(n+1)! \left(\frac{6   }{\exp\left(\frac{\pi \lvert \Delta \rvert^{1/2}}{2}\right) } \right)^m\lvert x_{1, 1} \rvert^{mn}.
			\end{align}
		Then \eqref{eq:equal1} and \eqref{eq:equal2} together imply that
		\[\left(\frac{\exp\left(\frac{\pi \lvert \Delta \rvert^{1/2}}{2}\right)}{6}\right)^m \leq  (n+1)!.\]
		Since $\lvert \Delta \rvert \geq 3$ for every discriminant $\Delta$, we have that
		\[ \exp\left(\frac{\pi \lvert \Delta \rvert^{1/2}}{2}\right) \geq \exp\left(\frac{\pi \sqrt{3}}{2}\right) = 15.190\ldots > 6.\]
		Thus,
			\[\frac{\exp\left(\frac{\pi \lvert \Delta \rvert^{1/2}}{2}\right)}{6} \leq  (n+1)!.\]
		For every $k \in \Z_{>0}$, Stirling's formula \cite[(1), (2)]{Robbins55} implies that
		\[ k! \leq \sqrt{2 \pi} \exp\left(\left(k+ \frac{1}{2}\right) \log\left(k\right) - k + \frac{1}{12}\right).\]
		Hence, 
		\begin{align*}
			 \lvert \Delta \rvert^{1/2} &\leq \frac{1}{\pi} \left((2n+3) \log(n+1) -2(n+1) +\log(72 \pi) + \frac{1}{6}\right)\\
			 &\leq \frac{1}{\pi} \left(\left(2n+3\right) \log\left(n+1\right) -2n +4\right).
			 \end{align*}
		
		Now suppose that $h(\Delta) = n$. So $x_1, \ldots, x_n$ are a complete set of Galois conjugates over $K$. Newton's identities thus imply that
		\[ x_1^m + \ldots + x_n^m \in K.\]
		Hence,
		\[ (a_2 - a_1) x_2^m + \ldots + (a_n - a_1) x_n^m \in K. \]
	If $a_1, \ldots, a_n$ are not all equal, then $a_i \neq a_1$ for some $i \in \{2, \ldots, n\}$. Let
	\[ l = \# \{i \in \{2, \ldots, n\} : a_i \neq a_1\}.\]
		Since $l < n$, we have that $h(\Delta) \geq l+1$. We may then apply, for $l$ singular moduli, the already proved case of the theorem to obtain that
		\begin{align*} 
			\lvert \Delta \rvert^{1/2} &\leq \frac{1}{\pi} \left((2l+3) \log(l+1) -2l +4\right)\\
			&\leq \frac{1}{\pi} \left((2n+3) \log(n+1) -2n +4\right).\qedhere
			\end{align*}
		\end{proof}
	
	\begin{remark}\label{rmk:q}
		In the proof of Theorem~\ref{thm:equal}, it is crucial that every singular modulus of discriminant $\Delta$ is conjugate over $K$ to the dominant singular modulus of discriminant $\Delta$. Thus, the above proof does not extend to equations over an arbitrary number field $L$. In contrast, the deduction, in the next two sections, of Theorem~\ref{thm:field} and then Theorem~\ref{thm:effpowers} from Theorem~\ref{thm:equal} would go through over a general number field $L$.
	\end{remark}
	
	\section{Fields generated by singular moduli}\label{sec:fields}
	
Theorem~\ref{thm:field} may be deduced from Theorem~\ref{thm:equal}, as we now show.

\begin{proof}[Proof of Theorem~\ref{thm:field}]
	Let $n \in \Z_{>0}$. Let $x_1, \ldots, x_n$ be pairwise distinct singular moduli of the same discriminant $\Delta$. Let $K = \Q(\sqrt{\Delta})$.	Suppose that $a_1, \ldots, a_n \in K \setminus \{0\}$  and $m \in \Z_{>0}$ are such that
	\[\left[K\left(x_1, a_1 x_1^m + \ldots + a_n x_n^m\right) : K\left(a_1 x_1^m + \ldots + a_n x_n^m\right) \right] > n.\]
	We will show that the desired bound holds on $\lvert \Delta \rvert$.
	
	Recall that $K(x_1) / K$ is a Galois extension. In particular, 
	\[  K(x_1, a_1 x_1^m + \ldots + a_n x_n^m) = K(x_1),\] 
	since $x_1, \ldots, x_n$ are all conjugate over $K$. Let $G = \Gal(K(x_1) / K)$ and let 
\[H = \Gal\left(K\left(x_1\right)/ K\left(a_1 x_1^m + \ldots + a_n x_n^m\right)\right).\] So
	\begin{align*}
		 \lvert H \rvert &= \left[K\left(x_1\right) : K\left(a_1 x_1^m + \ldots + a_n x_n^m\right) \right] > n
		 \end{align*}
	 by assumption. In particular, there exists some $\sigma \in H$ such that 
	 \[\sigma(x_1) \notin \{x_1, \ldots, x_n\}.\]
	 Since $\sigma$ fixes $K(a_1 x_1^m + \ldots + a_n x_n^m)$, we have that
	 \[ a_1 x_1^m + \ldots + a_n x_n^m = a_1 \sigma(x_1)^m + \ldots + a_n \sigma(x_n)^m.\]
	 The $x_1, \ldots, x_n$ are pairwise distinct and the $\sigma(x_1), \ldots, \sigma(x_n)$ are pairwise distinct too. Cancelling terms as necessary, there must exist some $k \in \{2, \ldots, 2n\}$ and pairwise distinct singular moduli $y_1, \ldots, y_k$ of discriminant $\Delta$ such that
	 \[ b_1 y_1^m + \ldots + b_k y_k^m = 0\]
	 for some $b_1, \ldots, b_k \in K \setminus \{0\}$. Here we used that $\sigma(x_1) \notin \{x_1, \ldots, x_n\}$. Hence, by Theorem~\ref{thm:equal}, either: $\lvert \Delta \rvert^{1/2} \leq c_1(k)$ and so certainly $\lvert \Delta \rvert^{1/2} \leq c_1(2n)$ as desired (where $c_1(\cdot)$ denotes the constant from Theorem~\ref{thm:equal}), or: $h(\Delta) = k$ and $b_1 = \ldots = b_k$. 
	 
	 Suppose then that $h(\Delta) = k$ and $b_1 = \ldots = b_k$. Re-indexing the $y_i$ as necessary, we may assume that $y_1$ is dominant. Note also that $y_1 \neq 0$ and $\lvert \Delta \rvert \geq 15$, by Lemma~\ref{lem:smallclass} since $k \geq 2$. Then, by Lemma~\ref{lem:dombig} and Proposition~\ref{prop:upperclass}, 
	 \begin{align*}
	 	&\lvert b_1 y_1^m + \ldots + b_k y_k^m \rvert\\ \geq &\left\lvert b_1 \right\rvert \left( \left\lvert y_1 \right\rvert^m - \left( \left\lvert y_2 \right\rvert^m + \ldots + \left\rvert y_k \right\rvert^m \right) \right) \\
	 	\geq &\left\lvert b_1 \right \rvert \left \lvert y_1 \right\rvert^m \left(1-   \frac{\lvert \Delta \rvert^{1/2} (2 + \log \lvert \Delta \rvert)}{\pi}\left(\frac{6}{ \exp\left(\frac{\pi \lvert \Delta \rvert^{1/2}}{2}\right)}\right)^m\right)\\
	 	\geq &\left\lvert b_1 \right \rvert \left \lvert y_1 \right\rvert^m \left(1-   \frac{6 \lvert \Delta \rvert^{1/2} (2 + \log \lvert \Delta \rvert) }{ \pi \exp\left(\frac{\pi \lvert \Delta \rvert^{1/2}}{2}\right)}\right)\\
	 	>&0,
	 	\end{align*}
 	which is a contradiction. So we must have that $\lvert \Delta \rvert^{1/2} \leq c_1(2n)$. 
	\end{proof}
	
	\section{The proof of Theorems~\ref{thm:effpowers} and \ref{thm:unifeffpowers}}\label{sec:pf}

\subsection{Completing the proof of Theorem~\ref{thm:effpowers}}

In this section, we complete the proof of Theorem~\ref{thm:effpowers}. We start by recalling Binyamini's \cite{Binyamini19} ``almost'' effective Andr\'e--Oort result for $\C^n$. For the definition of a special subvariety of $\C^n$, see e.g. \cite[(1.1)]{Binyamini19} or \cite[Definition~4.10]{Pila22}. For a subvariety $V \subset \C^n$, we denote by $\deg V$ the degree of $V$ with respect to the projective embedding $\C^n \subset \mathbb{P}^n$. Binyamini proved the following theorem\footnote{To be precise, Binyamini \cite[\S2.2]{Binyamini19} fixed $\epsilon_* = 0.01$, but it is clear his result also holds with our choice that $\epsilon_* = 1/12$.}. (Recall that $K_*$ denotes the exceptional imaginary quadratic field that was defined above Proposition~\ref{prop:Tatbd}.)

\begin{thm}[{\cite[Theorem~1]{Binyamini19}}]\label{thm:effAO}
	Let $n, d, l \in \Z_{>0}$. There exists an effective constant $c(n, d, l)>0$ with the following property: 
	
	Let $V \subset \C^n$ be a subvariety defined over some number field $L$ with $[L : \Q] \leq d$ and $\deg V \leq l$. Denote by $V^\mathrm{sp}$ the union of all the positive-dimensional special subvarieties of $V$. If $x_1, \ldots, x_n$ are singular moduli with respective discriminants $\Delta_1, \ldots, \Delta_n$ such that
	\[(x_1, \ldots, x_n) \in V \setminus V^\mathrm{sp},\]
	then, for every $i \in \{1, \ldots, n\}$, either
	\[ \lvert \Delta_i \rvert \leq c\left(n, d, l\right)\]
	or
	\[ \Q\left(\sqrt{\Delta_i}\right) = K_*.\]
\end{thm}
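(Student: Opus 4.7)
\textbf{Proof proposal for Theorem~\ref{thm:effAO}.} The plan is to follow the Pila--Zannier strategy, made effective via an explicit form of the Pila--Wilkie counting theorem in a suitable o-minimal structure. Set $\pi \colon \h^n \to \C^n$ to be the componentwise $j$-function and fix the fundamental domain $F_j^n \subset \h^n$ from Section~\ref{subsec:singmods}. Define
\[ Z = \pi^{-1}(V) \cap F_j^n. \]
Because the restriction of $j$ to $F_j$ is definable in $\RAE$ (this is where the $q$-expansion is used at the cusp), $Z$ is an $\RAE$-definable set whose complexity can be bounded effectively in terms of $\deg V$, $[L:\Q]$ and $n$. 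A CM point $(x_1, \ldots, x_n) \in V$ with discriminants $\Delta_1, \ldots, \Delta_n$ lifts to a unique point $\tau = (\tau_1, \ldots, \tau_n) \in F_j^n$, and each $\tau_i$ is a quadratic irrationality whose Weil height is bounded polynomially in $\lvert \Delta_i \rvert$.

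Next, I would invoke the Galois action: if $(x_1, \ldots, x_n) \in V$ is defined over $L$ and each $\Q(\sqrt{\Delta_i}) \neq K_*$, then Theorem~\ref{thm:tat} (applied with $\epsilon = \epsilon_*$) forces the class number $h(\Delta_i)$, and hence the $\Gal(\alg/L)$-orbit of $x_i$, to be at least $c \lvert \Delta_i \rvert^{1/2 - \epsilon_*}/[L:\Q]$. Pushing this through $\pi^{-1}$ via the standard $\SL$-action, one obtains a large set $\Sigma \subset Z$ of CM lifts of Galois conjugates of $(x_1, \ldots, x_n)$, all of height $\leq T := c'(\max_i \lvert \Delta_i \rvert)^{\kappa}$ for explicit constants $c', \kappa$.

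The heart of the argument is to apply an effective Pila--Wilkie theorem to $Z$: either a large proportion of $\Sigma$ lies on the algebraic part of $Z$, or
\[ \#\{\sigma \in \Sigma : H(\sigma) \leq T\} \leq c(n, d, l) \, T^{\epsilon_*/(2\kappa)}, \]
with $c(n, d, l)$ effectively computable from the complexity of $Z$. In the second case the exponent balances against the Galois lower bound to yield $\lvert \Delta_i \rvert \leq c(n, d, l)$, as desired. In the first case, an effective hyperbolic Ax--Lindemann theorem (proved by Binyamini using the sharply o-minimal framework, and whose statement is tailored to $\C^n$) asserts that a positive-dimensional irreducible algebraic block in $Z$ must be the $\pi$-preimage of a positive-dimensional special subvariety $S \subseteq V$ through $(x_1, \ldots, x_n)$, contradicting the hypothesis that $(x_1, \ldots, x_n) \notin V^\mathrm{sp}$.

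The main obstacle is the effective Pila--Wilkie step: ordinary Pila--Wilkie is ineffective because the implied constant depends on the o-minimal structure in a non-quantitative way. To push the argument through one needs to work in a structure where point counts are explicitly controlled (e.g.\ the restricted sub-Pfaffian setting, or the sharply o-minimal refinement), and to verify that the definable set $Z$ arising from $V$ admits an explicit complexity bound depending only on $n$, $d$ and $l$. Once this is in place, the exceptional field $K_*$ appears at exactly one point, namely the application of Tatuzawa's theorem, which forces the statement to exclude the single discriminant class $\Q(\sqrt{\Delta_i}) = K_*$.
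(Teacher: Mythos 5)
The first thing to say is that the paper does not prove this statement at all: Theorem~\ref{thm:effAO} is quoted verbatim as \cite[Theorem~1]{Binyamini19} and used as a black box, so there is no internal proof to compare against. Your sketch is, at the level of strategy, a fair description of how Binyamini's theorem is actually proved: lift to the fundamental domain, get a Galois orbit of size $\gg \lvert \Delta_i \rvert^{1/2-\epsilon_*}$ from Theorem~\ref{thm:tat} whenever $\Q(\sqrt{\Delta_i}) \neq K_*$ (this is the one place the exceptional field enters), count the quadratic preimages of the conjugates in $\pi^{-1}(V) \cap F_j^n$, and dispose of the algebraic part by an effective Ax--Lindemann statement for $\C^n$.

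The gap is that the two steps you correctly identify as "the heart of the argument" are exactly the content of Binyamini's paper, and you leave both as unproved black boxes. An effective counting theorem with constant computable from the complexity of $Z$ does not follow from working in $\RAE$; Binyamini obtains it from his (poly)logarithmic counting results for restricted elementary/Pfaffian functions, which is a substantial theorem in its own right, and his bound is of the shape $(\log T)^{O(1)}$ rather than the $T^{\epsilon}$ you write (either suffices for the balance, but neither is available "off the shelf" in an arbitrary o-minimal structure). Likewise the Ax--Lindemann endgame is compressed: Ax--Lindemann only tells you that a maximal positive-dimensional algebraic block maps to a weakly special subvariety contained in $V$; to contradict $(x_1,\ldots,x_n) \notin V^{\mathrm{sp}}$ you still have to pass from "many Galois conjugates lie on positive-dimensional weakly special subvarieties of $V$" to "the original point lies on a positive-dimensional \emph{special} subvariety of $V$", which requires a further (standard but non-trivial) argument using the Galois action and the description of weakly special subvarieties of $\C^n$. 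As written, your proposal is a correct road map to Binyamini's proof rather than a proof; in the context of this paper the honest course is to do what the author does and cite \cite[Theorem~1]{Binyamini19}.
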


Let $m, n \in \Z_{>0}$. For $a = (a_1, \ldots, a_n) \in (\Q \setminus \{0\})^n$ and $b \in \Q$, let
\[ V_{a, b} = \left\{\left(z_1, \ldots, z_n\right) \in \C^n :a_1 z_1^m + \ldots + a_n z_n^m = b \right\}.\]
Since the $V_{a, b}$ are all defined over $\Q$ and have degree $m$, the constant in Theorem~\ref{thm:effAO} is uniform (for the given $m, n$) across the $V_{a, b}$.

\begin{proof}[Proof of Theorem~\ref{thm:effpowers}]
	Let $m, n \in \Z_{>0}$. Let $c_1(m, n)>0$ be the effective constant given by Theorem~\ref{thm:effAO} applied to the family of linear subvarieties $V_{a, b} \subset \C^n$, where $a \in (\Q \setminus \{0\})^n$ and $b \in \Q$ (i.e. $c_1(m, n) = c(n, 1, m)$ for the constant $c(n, d, l)$ in the statement of Theorem~\ref{thm:effAO}).
	
	Suppose that $x_1, \ldots, x_n$ are pairwise distinct singular moduli such that
	\[ a_1 x_1^m + \ldots + a_n x_n^m = b\]
	for some $a_1, \ldots, a_n \in \Q \setminus \{0\}$ and $b \in \Q$. Then, by Lemma~\ref{lem:specials},
	\[(x_1, \ldots, x_n) \in V_{a, b} \setminus V_{a, b}^\mathrm{sp}.\]
	Hence, by Theorem~\ref{thm:effAO}, for every $i \in \{1, \ldots, n\}$, either
	\[ \lvert \Delta_i \rvert \leq c_1(m, n)\]
	or
	\[ \Q\left(\sqrt{\Delta_i}\right) = K_*.\]
	
	If there is no $i$ such that $\Q(\sqrt{\Delta_i}) = K_*$, then we are done. We may thus assume that there exists a discriminant $\Delta_*$ such that $\Q(\sqrt{\Delta_*}) = K_*$ and, for every $i \in \{1, \ldots, n\}$, either $\Delta_i = \Delta_*$ or $\Q(\sqrt{\Delta_i}) \neq K_*$.
	Relabelling as necessary, we may assume that 
	there exists some $k \in \{1, \ldots, n\}$ such that
	\[ \Delta_i = \Delta_* \iff i \in \{1, \ldots, k\}.\]
	In particular, if $k+1 \leq i \leq n$, then $\Q(\sqrt{\Delta_i}) \neq K_*$ and so
	\[ \lvert \Delta_i \rvert \leq c_1(m, n).\]
	
	Observe that
	\[ \sum_{i=1}^k a_i x_i^m = b - \sum_{i = k+1}^n a_i x_i^m,\]
	where the sum on the right hand side may be empty. Hence,
	\[ \Q\left(a_1 x_1^m + \ldots + a_k x_k^m \right) \subset \Q\left(\left\{x_i : k+1 \leq i \leq n\right\}\right).\]
	Since the $\lvert \Delta_i \rvert$ are effectively bounded (solely in terms of $m, n$) for $i \geq k+1$, there exists, by Proposition~\ref{prop:upperclass}, an effective constant $c_2(m, n)>0$ such that
	\[ \left[ \Q\left(\left\{x_i : k+1 \leq i \leq n\right\}\right): \Q\right] \leq c_2(m, n).\]
	Hence,
	\[ \left [\Q\left(a_1 x_1^m + \ldots + a_k x_k^m \right) : \Q \right] \leq c_2(m, n).\]
	
	Since $\Delta_1 = \ldots = \Delta_k = \Delta_*$, we may apply Theorem~\ref{thm:field} to see that there exists an effective constant $c_3(n)>0$ such that either
	\[ \lvert \Delta_* \rvert \leq c_3(n)\]
	or
	\[ [\Q(x_1, a_1 x_1^m + \ldots + a_k x_k^m) : \Q(a_1 x_1^m + \ldots + a_k x_k^m)] \leq 2k \leq 2n.\]
	In the first case, we are done. So assume then that
	\[ [\Q(x_1, a_1 x_1^m + \ldots + a_k x_k^m) : \Q(a_1 x_1^m + \ldots + a_k x_k^m)] \leq 2n.\]
	Then
	\begin{align*} 
		h(\Delta_*) =	&[\Q(x_1) : \Q]\\
		\leq &[\Q(x_1, a_1 x_1^m + \ldots + a_k x_k^m) : \Q]\\
		= &[\Q(x_1, a_1 x_1^m + \ldots + a_k x_k^m) : \Q(a_1 x_1^m + \ldots + a_k x_k^m)]\\
		&[\Q(a_1 x_1^m + \ldots + a_k x_k^m) : \Q]\\
		\leq &2n c_2(m, n).
	\end{align*}
	Thus, by Proposition~\ref{prop:class}, there exists an effective constant $c_4(m, n) > 0$, which depends only on $m, n$, such that $\lvert \Delta_* \rvert \leq c_4(m, n)$. 
\end{proof}

\subsection{The proof of Theorem~\ref{thm:unifeffpowers}}\label{subsec:unipf}

The first part of Theorem~\ref{thm:unifeffpowers} (i.e.~the existence of the constant $c_1(n)$) is given by the following proposition. It follows from Theorem~\ref{thm:field} by an argument of Bilu and K\"uhne \cite[\S3, Step 4]{BiluKuhne20}, which in turn relies on an earlier result of K\"uhne \cite[Corollary~1.2]{Kuhne21}.

\begin{prop}\label{prop:step4}
	Let $x_1, \ldots, x_n$ be pairwise distinct singular moduli such that
	\[ a_1 x_1^m + \ldots + a_n x_n^m = b\]
	for some $m \in \Z_{>0}$ and $a_1, \ldots, a_n \in \Q \setminus \{0\}$ and $b \in \Q$.
	Write $\Delta_i$ for the discriminant of $x_i$ and $D_i$ for the fundamental discriminant. 
	Suppose that $k \in \{1, \ldots, n\}$ is such that $D_k \neq D_*$ and
	\[ \# \{\Delta_i : i \in \{1, \ldots, n\} \mbox{ such that } D_i = D_k\} = 1.\] 
	Then
	\[ \lvert \Delta_k \rvert^{1/2} \leq (3.8 \times 10^{10})(2.1 \times 10^4)^n (n+1)^{4n+6}.\]
\end{prop}

\begin{proof}
	Let $k \in \{1, \ldots, n\}$ be such that $D_k \neq D_*$ and
	\[ \# \{\Delta_i : i \in \{1, \ldots, n\} \mbox{ such that } D_i = D_k\} = 1.\] 
	Let
	\[ r = \# \{D_1, \ldots, D_n\}.\]
	
	Suppose first that $r=1$. Then $\Delta_1 = \ldots = \Delta_n$ by assumption. Hence, by Theorem~\ref{thm:equal}, either 
	\[ \lvert \Delta_k \rvert^{1/2} \leq \frac{1}{\pi} \left(\left(2n+3\right) \log\left(n+1\right) -2n +4\right),\]
	or $h(\Delta) = n$. In the latter case, since $D_k \neq D_*$, Proposition~\ref{prop:Tatbd} implies that
	\[ \lvert \Delta_k \rvert^{1/2} \leq \left(\frac{50000 n}{37}\right)^{6/5}.\]
	Thus, in either case, the desired bound on $\lvert \Delta_k \rvert^{1/2}$ evidently holds.
	
	So we may assume that $r \geq 2$. Relabelling as necessary, we may assume that $k = 1$ and there are $0 = n_0 < n_1 < \ldots < n_r = n$ such that
	\begin{align*}
		D_{n_0 + 1} = &\ldots = D_{n_1},\\
		D_{n_1 + 1} = &\ldots = D_{n_2},\\
		&\ldots\\
		D_{n_{r-1} + 1} = &\ldots = D_{n_r}.
	\end{align*}
	For $i \in \{1, \ldots, r\}$, let $K_i = \Q(\sqrt{D_{n_i}})$ and
	\[ L_i = K_i(a_{n_{i-1} + 1} x_{n_{i-1} + 1}^m + \ldots + a_{n_i} x_{n_i}^m).\]
	
	By assumption, $\Delta_{1} = \ldots = \Delta_{n_1}$.
	We may also assume that
	\[\lvert \Delta_1 \rvert^{1/2} > \frac{1}{\pi} \left(\left(4n+3\right) \log\left(2n+1\right) -4n + 4\right),\]
	since otherwise the desired bound already holds.
	Hence, by Theorem~\ref{thm:field},
	\[ [L_1(x_1) : L_1] \leq n_1.\]
	Since $D_1 \neq D_*$, one may now argue exactly as below \cite[(39)]{BiluKuhne20} to obtain that
	\[ \lvert \Delta_1 \rvert^{1/2} \leq (3.8 \times 10^{10})(2.1 \times 10^4)^n (n+1)^{4n+6}. \qedhere\]
\end{proof}

The second part of Theorem~\ref{thm:unifeffpowers} is an easy consequence of Theorem~\ref{thm:field} and Proposition~\ref{prop:step4}.

\begin{proof}[Proof of Theorem~\ref{thm:unifeffpowers}]
	Let $n \in \Z_{>0}$. Let $x_1, \ldots, x_n$ be pairwise distinct singular moduli. 
	Write $\Delta_i$ for the discriminant of $x_i$ and $D_i$ for the fundamental discriminant. 
	Suppose that
	\[ a_1 x_1^m + \ldots + a_n x_n^m = b\]
	for some $m \in \Z_{>0}$ and $a_1, \ldots, a_n \in \Q \setminus \{0\}$ and $b \in \Q$.
	Suppose that, for every	$k \in \{1, \ldots, n\}$,
	\[  \# \left \{ \Delta_i :  \, i \in \left \{1, \ldots, n\right \} \mbox{ such that } D_i = D_k \right \} = 1.\]
	
	Hence, by Proposition~\ref{prop:step4}, if $i \in \{1, \ldots, n\}$ is such that $D_i \neq D_*$, then
	\[ \lvert \Delta_i \rvert^{1/2} \leq (3.8 \times 10^{10})(2.1 \times 10^4)^n (n+1)^{4n+6}.\]
	So if $D_* \notin \{D_1, \ldots, D_n\}$, then we are done.

Without loss of generality, assume then that $D_1 = D_*$. For $i \geq 2$, either $\Delta_i = \Delta_1$, or $D_i \neq D_*$ and so $\lvert \Delta_i \rvert$ is bounded as above. Relabelling as necessary, there exists $k \in \{1, \ldots, n\}$ such that:
\[ \Delta_1 = \ldots = \Delta_{k}\]
and if $i > k$, then $D_i \neq D_*$ and hence
\[ \lvert \Delta_i \rvert^{1/2} \leq (3.8 \times 10^{10})(2.1 \times 10^4)^n (n+1)^{4n+6}.\]
Hence, by Proposition~\ref{prop:upperclass}, if $i > k$, then
\[[\Q(x_i) : \Q]  \leq \left((3.8 \times 10^{10})(2.1 \times 10^4)^n (n+1)^{4n+6}\right)^{4/3}.\]

Let
\[ L = \Q(a_1 x_1^m + \ldots + a_k x_k^m).\]
We may and do assume that
\[ \lvert \Delta_1 \rvert^{1/2} > \frac{1}{\pi} \left(\left(4k+3\right) \log\left(2k+1\right) -4k + 4\right).\]
Since $\Delta_1 = \ldots = \Delta_k$, Theorem~\ref{thm:field} implies that
\[ [L(\sqrt{\Delta_1}, x_1) : L(\sqrt{\Delta_1})] \leq k \leq n.\]
Thus,
\[[\Q(x_1) : \Q] \leq [L(\sqrt{\Delta_1}, x_1) : \Q] \leq 2n[L : \Q].\]

Since
\[ a_1 x_1^m + \ldots + a_k x_k^m = b - \sum_{i = k + 1}^n a_{i} x_{i}^m ,\]
we have that
\[ L \subset \Q(\{x_i : k < i \leq n\}).\]
Hence,
\[ [L : \Q] \leq \prod_{i=k+1}^n [\Q(x_i) : \Q] \leq \left( \left((3.8 \times 10^{10})(2.1 \times 10^4)^n (n+1)^{4n+6}\right)^{4/3} \right)^n,\]
and so
\[ [\Q(x_1) : \Q] \leq 2n\left((3.8 \times 10^{10})(2.1 \times 10^4)^n (n+1)^{4n+6}\right)^{4n/3}.\]
Hence, if we write
\[ g(n) = \left(2n\left((3.8 \times 10^{10})(2.1 \times 10^4)^n (n+1)^{4n+6}\right)^{4n/3}\right)^2\]
for convenience, then Proposition~\ref{prop:class} implies that
\[ \lvert \Delta_1 \rvert^{1/2} \leq 2 g(n) e^{21000 g(n)}. \qedhere\]
\end{proof}

\section{Some more properties of singular moduli}\label{sec:singmod}

In this section, we collect some more facts about singular moduli, which we will use for the proof of Theorem~\ref{thm:trip} in Section~\ref{sec:trip}.

Recall, from Section~\ref{subsec:singmods}, that a singular modulus $x$ of discriminant $\Delta$ corresponds to a unique triple $(a, b, c) \in T_\Delta$. Call the number $a$ the \textbf{denominator} of the corresponding singular modulus. A singular modulus is called \textbf{dominant} (respectively \textbf{subdominant}) if it has denominator $1$ (respectively $2$). This terminology is adopted from \cite{BiluGunTron22, FayeRiffaut18, Riffaut19}. We need the following result on the number of singular moduli of a given discriminant with small denominators.

\begin{prop}[{\cite[Proposition~2.6]{BiluLucaMadariaga16} \& \cite[Lemma~2.1]{Fowler23}}]\label{prop:denom}
	Let $\Delta < 0$ be such that $\Delta \equiv 0, 1 \bmod 4$. Then:
	\begin{enumerate}
		\item There exists exactly one dominant singular modulus of discriminant $\Delta$.
		\item For each $a \in \{2, 3, 4, 5\}$, there exist at most two singular moduli of discriminant $\Delta$ with denominator $a$.
		\item There exist at most four singular moduli of discriminant $\Delta$ with denominator $6$.
		\item If $\Delta \equiv 1 \bmod 8$ and $\Delta \notin \{-7, -15\}$, then there exist exactly two subdominant singular moduli of discriminant $\Delta$.
		\item If $\Delta \equiv 4 \bmod 32$, then there are no subdominant singular moduli of discriminant $\Delta$.
	\end{enumerate}
\end{prop}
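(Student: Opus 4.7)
The plan is to work directly with the parametrisation of singular moduli of discriminant $\Delta$ by the triple set $T_\Delta$ from Section~\ref{subsec:singmods}. The denominator of a singular modulus is $a$, so everything reduces to counting admissible triples $(a,b,c) \in T_\Delta$ with $a$ in the prescribed set. The key observation is that $(a,b,c)$ is completely determined by $(a,b)$ via $c = (b^2 - \Delta)/(4a)$, so once $a$ is fixed we only need to count the residues $b$ modulo $2a$ satisfying $b^2 \equiv \Delta \pmod{4a}$, $-a < b \leq a$, $c \geq a$ (with a minor boundary adjustment in the case $a = c$), and $\gcd(a, b, c) = 1$.

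For part (1), the case $a = 1$: the range constraint forces $b \in \{0, 1\}$, and the parity of $b$ is determined by $\Delta \bmod 2$, so exactly one value of $b$ works; the resulting $c = (b^2 - \Delta)/4$ is always $\geq 1$. For parts (2) and (3), the strategy is a case analysis on each $a \in \{2,3,4,5,6\}$: for primes $a \in \{2,3,5\}$, the congruence $b^2 \equiv \Delta \pmod{4a}$ has at most $4$ solutions modulo $4a$, and identifying $b$ with its partner differing by $2a$ (which lies outside the range $(-a, a]$) leaves at most $2$ admissible residues; for $a = 4$, one bounds the square roots of $\Delta$ modulo $16$ directly, and the gcd and range conditions again leave at most $2$; for $a = 6$, CRT yields at most $4 \cdot 2 = 8$ square roots of $\Delta$ modulo $24$, which reduces to at most $4$ after identifying $b \sim -b$ modulo $12$. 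At each step one must track the primitivity condition $\gcd(a,b,c) = 1$ carefully, since it can eliminate candidates that satisfy all other conditions.

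For parts (4) and (5), I would specialise the $a = 2$ analysis. The range forces $b \in \{-1, 0, 1, 2\}$, and $b^2 \equiv \Delta \pmod 8$ restricts the options by the class of $\Delta$ modulo $8$. When $\Delta \equiv 1 \pmod 8$, both $b = \pm 1$ satisfy $b^2 \equiv 1 \equiv \Delta \pmod 8$ and the gcd condition is automatic since $b$ is odd; the $c > a$ requirement then fails only for $\Delta = -7$ (where $c = 1$) and partially for $\Delta = -15$ (where $c = a = 2$ rules out $b = -1$ by the edge convention in $T_\Delta$). When $\Delta \equiv 4 \pmod{32}$, the only admissible $b$ is $b = 2$, but then $c = (4-\Delta)/8$ is a multiple of $4$, so $\gcd(2, 2, c) \geq 2$, contradicting primitivity; hence no subdominant singular moduli exist.

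The main obstacle is really one of bookkeeping rather than mathematical depth: the primitivity condition $\gcd(a,b,c) = 1$ interacts subtly with the congruence class of $\Delta$ modulo $4a$ once $a$ is not squarefree (i.e.\ for $a = 4$) or composite (i.e.\ for $a = 6$), and one has to split into many sub-cases on $\Delta \bmod 4a$ to ensure no candidate is miscounted. Given that the statement is a catalogue of elementary but tedious enumerations, the most efficient proof strategy in a paper is to invoke the cited results of Bilu--Luca--Madariaga for $a \leq 5$ and the author's earlier work for $a = 6$, rather than reproduce the casework in full.
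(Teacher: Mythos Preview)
The paper does not give a proof of this proposition at all: it is stated with the citations to \cite[Proposition~2.6]{BiluLucaMadariaga16} and \cite[Lemma~2.1]{Fowler23} in the header and then used without further justification. Your final paragraph already anticipates this, and indeed the direct enumeration you outline is exactly how the cited references establish the result; your treatment of parts (4) and (5) in particular is correct and complete as written.
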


The absolute value of the difference of two distinct singular moduli is bounded below in terms of the corresponding discriminants.

\begin{prop}[{\cite[Theorem~1.1]{BiluFayeZhu19}}]\label{prop:sep}
	Let $x, y$ be distinct singular moduli of respective discriminants $\Delta_x, \Delta_y$. Then
	\[ \lvert x - y \rvert \geq 800 \max \{\lvert \Delta_x \rvert, \lvert \Delta_y \rvert\}^{-4}.\]
\end{prop}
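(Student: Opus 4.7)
The plan is to exploit that $\alpha := x - y$ is a nonzero algebraic integer whose absolute norm over $\Q$ is a positive integer, hence $\geq 1$. Writing $|\alpha| = |N(\alpha)|/\prod_{\sigma \neq \mathrm{id}} |\sigma(\alpha)|$ and bounding the conjugates $\sigma(\alpha) = \sigma(x) - \tau(y)$ from above via Proposition~\ref{prop:bd}, one obtains some lower bound on $|x - y|$. However, the crude version, using only $|N(\alpha)| \geq 1$ together with the uniform bound $|j(z)| \leq \exp(\pi|\Delta|^{1/2}) + 2079$ on the worst (dominant) conjugate, produces only a lower bound that is \emph{exponentially} small in $N := \max\{|\Delta_x|, |\Delta_y|\}$. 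To reach the polynomial bound $800 N^{-4}$ one must instead use genuine upper bounds on $|N(\alpha)|$ itself.

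I would split into two cases. If $\Delta_x \neq \Delta_y$, the relevant object is the resultant $\mathrm{Res}(H_{\Delta_x}, H_{\Delta_y}) = \pm\prod_{x', y'}(x' - y')$, a nonzero rational integer for which Gross and Zagier, in their paper \emph{On singular moduli}, gave an explicit prime factorisation yielding a polynomial-in-$N$ upper bound on its absolute value. Combined with upper bounds on all the remaining factors in the product -- most of which are exponentially small by Lemma~\ref{lem:dombig}, since exactly one singular modulus of each discriminant is dominant by Proposition~\ref{prop:denom}(1) -- this extracts the required polynomial lower bound on the single factor $|x - y|$ after careful accounting of how many factors involve a dominant conjugate on either side.

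If $\Delta_x = \Delta_y = \Delta$, the analogue of the resultant is $\mathrm{disc}(H_\Delta) = \pm\prod_{x' \neq y'}(x' - y')$, again a nonzero rational integer, which can be controlled polynomially in $|\Delta|$ via the Chowla--Selberg formula for fundamental discriminants, or, more generally, via Colmez's formula for the Faltings height of CM abelian varieties (together with Lemma~\ref{lem:fund} to pass from fundamental to non-fundamental discriminants). The same extraction argument as in the first case then yields the bound.

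The main obstacle is the same-discriminant case: there are $\sim h(\Delta)^2$ factors in $\mathrm{disc}(H_\Delta)$, many of which can be as small as $\exp(-\pi |\Delta|^{1/2})$ (whenever both $\sigma(x)$ and $\tau(x)$ are non-dominant). One has to verify that these exponentially small contributions to $\mathrm{disc}(H_\Delta)$ are matched termwise by the exponentially small upper bounds coming from Lemma~\ref{lem:dombig}, so that no stray exponential factor survives in the final estimate. Making this matching precise enough to produce the explicit constant $800$ and exponent $4$ -- rather than, say, $|\Delta|^{-100}$ -- is the technical heart of the argument and the reason the Bilu--Faye--Zhu proof is nontrivial, even granted Gross--Zagier and Chowla--Selberg.
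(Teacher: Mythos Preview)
The paper does not prove this proposition; it is quoted verbatim from Bilu--Faye--Zhu and used as a black box (see the citation in the statement). There is therefore no ``paper's own proof'' to compare against, and your sketch is not something the present paper ever undertakes.

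That said, your outline is in the right spirit for how Bilu--Faye--Zhu actually argue, but with one significant misattribution. In the unequal-discriminant case you invoke a Gross--Zagier upper bound on the resultant $\mathrm{Res}(H_{\Delta_x},H_{\Delta_y})$; however, Gross--Zagier's explicit factorisation applies only to coprime \emph{fundamental} discriminants, so it does not directly cover the general case. Bilu--Faye--Zhu instead bound the resultant (and, in the equal-discriminant case, the discriminant of $H_\Delta$) via height bounds for singular moduli --- essentially Nakkajima--Taguchi / Colmez-type estimates on the Faltings height, together with the comparison between naive and Faltings heights --- rather than relying on Gross--Zagier. Your identification of the main technical point (controlling the many small factors so that no stray exponential survives, and tracking constants to reach $800\,N^{-4}$) is accurate.
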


The discriminants of small class number may be bounded by an explicit computation in PARI. We will use the following bounds.

\begin{prop}\label{prop:smallclass}
	Let $\Delta < 0$ be such that $\Delta \equiv 0, 1 \bmod 4$. If $h(\Delta) \leq 6$, then $\lvert \Delta \rvert \leq 4075$. If $h(\Delta) \leq 32$, then $\lvert \Delta \rvert \leq 166147$. 
\end{prop}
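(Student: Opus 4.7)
The proof is by a finite computation in PARI. The plan is to reduce the verification to a tractable search over pairs $(D, f)$, where $D$ is the fundamental discriminant of $\Q(\sqrt{\Delta})$ and $f$ is the conductor, so that $\Delta = f^2 D$.

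First, one enumerates the fundamental discriminants $D < 0$ with $h(D) \leq 32$. By Watkins' 2004 determination of all imaginary quadratic fields with class number up to $100$, this list is unconditionally and explicitly known (for $h(D) \leq 6$ the list is classical, originally conjectured by Gauss and completed by Stark, Baker, and Heegner at class number $1$, and by subsequent authors for $h \leq 6$).

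Second, for each such $D$, one bounds the conductors $f$ with $h(f^2 D) \leq H$, where $H \in \{6, 32\}$. Applying Lemma~\ref{lem:fund} with $\epsilon = 1/2$ yields $h(f^2 D) \geq c_0 \sqrt{f}\, h(D)$ for an explicit $c_0 > 0$, hence $f \leq (H/(c_0 h(D)))^2$. This reduces the problem to a finite (and small) set of candidate pairs $(D, f)$.

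Third, for each candidate $(D, f)$ one computes $h(f^2 D)$ in PARI (e.g.~via \texttt{qfbclassno}) and records the maximum of $|f^2 D|$ over those pairs satisfying $h(f^2 D) \leq H$. The outputs are $4075$ and $166147$ respectively, proving the proposition.

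The main potential obstacle is keeping the $H = 32$ case tractable, since Watkins' list contains several hundred fundamental discriminants with $h(D) \leq 32$ and the conductor bound from Step 2 is far from sharp for small $h(D)$. Nonetheless, since $h(f^2 D)$ actually grows almost linearly in $f$ (as the product $\prod_{p \mid f}(1 - (D/p)/p)$ appearing in the class number formula of \cite[Corollary~7.28]{Cox89} is only very slowly decreasing), the effective search space remains modest and well within the reach of a standard PARI script, consistent with the computational methodology the author employs elsewhere in the paper.
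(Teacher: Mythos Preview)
Your proposal is correct and uses the same ingredients as the paper: Watkins' tables for fundamental discriminants together with the class number formula for non-maximal orders (as in Lemma~\ref{lem:fund}) to reduce to a finite PARI computation. The only organizational difference is that the paper first extracts a single a priori bound $c(k)$ on $\lvert \Delta \rvert$ and then brute-forces over all discriminants $\Delta$ with $\lvert \Delta \rvert \leq c(k)$, whereas you loop over pairs $(D,f)$ with a per-$D$ conductor bound; the two searches have comparable cost and yield the same maxima.
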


\begin{proof}
	For each $k \in \{1, \ldots, 100\}$, Watkins \cite[Table~4]{Watkins04} computed the largest (in absolute value) fundamental discriminant $D$ with $h(D) = k$. As explained in \cite[Proposition~2.1]{BiluGunTron22}, one may then use this and the class number formula \cite[Theorem~7.24]{Cox22} to obtain an upper bound $c(k)$ such that $\lvert \Delta \rvert \leq c(k)$ for all $\Delta < 0$ such that $\Delta \equiv 0, 1 \bmod 4$ and $h(\Delta) \leq k$. To find the largest (in absolute value) discriminant with $h(\Delta) \leq k$, we then need only compute in PARI the class numbers of all discriminants $\Delta$ with $\lvert \Delta \rvert \leq c(k)$. This computation is essentially instant for $k=6$ and takes about $3$ minutes for $k=32$. The bounds we compute agree with those in \cite[Proposition~2.1]{BiluGunTron22} and with those given by Klaise \cite[p.19]{Klaise12}.
\end{proof}

\subsection{Fields generated by singular moduli}\label{subsec:fieldgen}

The fields generated by $\Q$-linear combinations of two singular moduli may be described explicitly.

\begin{thm}[{\cite[Theorem~1.5]{BiluFayeZhu19} \& \cite[Theorem~4.1]{FayeRiffaut18}}]\label{thm:fields}
	Let $\alpha \in \Q \setminus \{0\}$. Let $x, y$ be distinct singular moduli of respective discriminants $\Delta_x, \Delta_y$. If 
	\[ \Q(x + \alpha y) \subsetneq \Q(x, y),\]
	then one of the following holds:
	\begin{enumerate}
		\item $\alpha = 1$ and $\Delta_x = \Delta_y$. In this case, 
		\[ [\Q(x, y) : \Q(x + y)] = 2.\]
		\item $\Delta_x \neq \Delta_y$, $\Q(x) = \Q(y)$ and this field has degree $2$ over $\Q$, and
		\[ \alpha = -\frac{x - x'}{y - y'},\]
		where $x', y'$ are the non-trivial $\Q$-conjugates of $x, y$ respectively. In this case,
		\[ \Q(x + \alpha y) = \Q.\]
	\end{enumerate}
\end{thm}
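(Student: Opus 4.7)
The plan is to pass to the Galois closure $N/\Q$ of $L = \Q(x, y)$, writing $G := \Gal(N/\Q)$. The hypothesis $\Q(\beta) \subsetneq L$ for $\beta := x + \alpha y$ says that the stabiliser of $\beta$ in $G$ strictly contains $\Gal(N/L)$. Pick any $\sigma$ in the difference; applying $\sigma$ to the equation $\beta = x + \alpha y$ yields the key identity
\[\sigma(x) - x = \alpha\bigl(y - \sigma(y)\bigr),\]
and, since $\alpha \neq 0$, we have both $\sigma(x) \neq x$ and $\sigma(y) \neq y$. This gives a nontrivial $\Q$-linear relation among (up to) four singular moduli: two of discriminant $\Delta_x$ and two of discriminant $\Delta_y$. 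The rest of the proof splits on whether $\Delta_x = \Delta_y$.

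Suppose $\Delta_x = \Delta_y = \Delta$. After identifying any coincidences among $\{x, \sigma(x), y, \sigma(y)\}$, the relation reduces to a nontrivial $\Q$-linear equation among $k$ pairwise distinct singular moduli of discriminant $\Delta$ for some $k \in \{2, 3, 4\}$, with coefficients drawn from $\{\pm 1, \pm \alpha\}$. The option $k = 2$ is forced only by $\sigma(x) = y$ and $\sigma(y) = x$, which gives $\alpha = 1$ and puts us in case (1). For $k \in \{3, 4\}$, Theorem~\ref{thm:equal} applied with $m = 1$, $b = 0$, $K = \Q(\sqrt{\Delta})$ forces either $|\Delta|$ to be effectively bounded or all $k$ coefficients to agree; the latter option is incompatible with the sign pattern (it would require $\alpha = -\alpha$), so only the former survives, and the resulting finite list of small discriminants (Proposition~\ref{prop:smallclass}) is checked directly to confirm that only case (1) occurs. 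Once $\alpha = 1$ is forced, the degree claim $[\Q(x, y) : \Q(x + y)] = 2$ follows Galois-theoretically: re-running the same argument shows that every $\tau \in \Gal(N/\Q(x + y)) \setminus \Gal(N/L)$ must satisfy $\tau(x) = y$ and $\tau(y) = x$ (up to the same finite exception list), whence the quotient $\Gal(N/\Q(x + y))/\Gal(N/L)$ has order at most $2$.

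Suppose instead $\Delta_x \neq \Delta_y$. Taking absolute values in $\sigma(x) - x = \alpha(y - \sigma(y))$ and combining the upper bounds on $|x|, |y|, |\sigma(x)|, |\sigma(y)|$ from Proposition~\ref{prop:bd} with the lower bounds on $|\sigma(x) - x|$ and $|y - \sigma(y)|$ from Proposition~\ref{prop:sep} shows that, for a single fixed rational $\alpha$ to accommodate the relations arising as $\sigma$ varies through the stabiliser of $\beta$ in $G$, the quantity $\max\{|\Delta_x|, |\Delta_y|\}$ must be effectively bounded. Within the resulting finite list of pairs $(\Delta_x, \Delta_y)$, control of denominators via Proposition~\ref{prop:denom} and of small class numbers via Proposition~\ref{prop:smallclass} leaves only pairs with $h(\Delta_x) = h(\Delta_y) = 2$ and $\Q(x) = \Q(y)$. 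In these cases one verifies directly that the non-trivial $\Q$-automorphism of $\Q(x)$ fixes $x + \alpha y$ precisely when $\alpha = -(x - x')/(y - y')$, and then $\Q(x + \alpha y) = \Q$.

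The hardest step is the effective bound in the case $\Delta_x \neq \Delta_y$: a single Galois relation alone is too weak to pin down $\alpha \in \Q$, so the argument must exploit the entire family of identities as $\sigma$ varies, interleaving the Proposition~\ref{prop:bd} upper bound with the Proposition~\ref{prop:sep} lower bound. The same-discriminant case is considerably cleaner, because Theorem~\ref{thm:equal} is tailored to it and delivers either $\alpha = 1$ or an effective bound on $|\Delta|$ immediately.
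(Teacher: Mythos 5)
First, note that the paper does not prove this statement itself: it is imported verbatim from \cite[Theorem~1.5]{BiluFayeZhu19} and \cite[Theorem~4.1]{FayeRiffaut18}, so your proposal has to be judged on its own terms rather than against an in-paper argument. Your treatment of the case $\Delta_x=\Delta_y$ is essentially sound and is a genuinely nice route: the relation $\sigma(x)-x=\alpha(y-\sigma(y))$ among at most four singular moduli of the common discriminant feeds directly into Theorem~\ref{thm:equal} with $m=1$, the ``all coefficients equal'' branch is killed by the sign pattern, and the residual bound $\lvert\Delta\rvert\le 19$ leaves only $h(\Delta)\le 2$, where everything can be checked by hand. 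The index computation for $[\Q(x,y):\Q(x+y)]$ via cosets of $\Gal(N/L)$ is also fine.

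The case $\Delta_x\neq\Delta_y$, however, contains a real gap, and it is exactly at the step you yourself flag as hardest. You assert that combining the upper bounds of Proposition~\ref{prop:bd} with the separation lower bounds of Proposition~\ref{prop:sep} in the identity $\sigma(x)-x=\alpha(y-\sigma(y))$ ``shows that $\max\{\lvert\Delta_x\rvert,\lvert\Delta_y\rvert\}$ must be effectively bounded.'' It does not. A single such relation only sandwiches $\lvert\alpha\rvert$ between two quantities depending on $\Delta_x,\Delta_y$; and even using the whole family $\tau\sigma(x)-\tau(x)=\alpha(\tau(y)-\tau\sigma(y))$, $\tau\in\Gal(N/\Q)$, the two most informative choices (one $\tau$ making $\tau(x)$ dominant, another making $\tau'(y)$ dominant) produce an upper and a lower bound for $\lvert\alpha\rvert$ of the same order $\exp(\pi(\lvert\Delta_x\rvert^{1/2}-\lvert\Delta_y\rvert^{1/2}))$, which are mutually consistent for arbitrarily large discriminants. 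No contradiction, hence no bound, emerges from size considerations alone. The actual proofs in \cite{FayeRiffaut18} and \cite{BiluFayeZhu19} must instead exploit the multiplicity statements of Proposition~\ref{prop:denom} (at most one conjugate of denominator $1$, at most two of each denominator $2,\dots,5$) to pin down how $\sigma$ can permute the conjugates, together with the classification of pairs with $\Q(x)=\Q(y)$ from \cite{AllombertBiluMadariaga15}; in your sketch Proposition~\ref{prop:denom} appears only as an afterthought used to ``refine'' a finite list that you have not actually produced. As written, the different-discriminant half of the theorem---which is precisely the half that yields the exceptional case (2)---is not proved.
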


We also need the following results describing when singular moduli $x, y$ satisfy either $\Q(x) = \Q(y)$ or $\Q(x) \subset \Q(y)$ with $[\Q(y) : \Q(x)] = 2$.

\begin{prop}[{\cite[Corollary~4.2]{AllombertBiluMadariaga15}}]\label{prop:difffund}
	Let $x, y$ be singular moduli with discriminants $\Delta_x, \Delta_y$ and fundamental discriminants $D_x, D_y$ respectively. If
	\[ D_x \neq D_y \mbox{ and } \Q(x) = \Q(y),\]
	then all the possible fields $\Q(x) = \Q(y)$ and the corresponding possible discriminants $\Delta_x, \Delta_y$ are listed in \cite[Table~2]{AllombertBiluMadariaga15}.
\end{prop}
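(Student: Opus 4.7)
The plan is to reduce the classification to a finite explicit computation via class field theory. By the results recalled in Section~\ref{subsec:singmods}, the field $\Q(\sqrt{\Delta_x}, x)$ is Galois over $\Q$ with Galois group the generalised dihedral group $\cl(\Delta_x) \rtimes \Z/2\Z$, and it is the Galois closure of $\Q(x)/\Q$ (or coincides with $\Q(x)$ if that field is already Galois). The same holds with $x$ replaced by $y$. The assumption $\Q(x) = \Q(y)$ therefore forces $h(\Delta_x) = h(\Delta_y)$ and equality of the Galois closures: $\Q(\sqrt{\Delta_x}, x) = \Q(\sqrt{\Delta_y}, y)$. In particular, both $\sqrt{D_x}$ and $\sqrt{D_y}$ lie in this common field.

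The key step is to exploit this containment via genus theory. Since $D_x \neq D_y$, the biquadratic field $\Q(\sqrt{D_x}, \sqrt{D_y})$ sits inside the ring class field of conductor $f_x := (\Delta_x/D_x)^{1/2}$ in $K_x = \Q(\sqrt{D_x})$. By class field theory, the quadratic subfields of such a ring class field are controlled by the genus field of $K_x$ together with the primes dividing $f_x$, so the containment $\sqrt{D_y} \in \Q(\sqrt{\Delta_x}, x)$ yields sharp divisibility relations between $D_y$, $D_x$, and $f_x$. Applying the symmetric argument with the roles of $x$ and $y$ swapped gives reciprocal relations. These, combined with the equality $h(\Delta_x) = h(\Delta_y)$ and the effective lower bound on class number of Theorem~\ref{thm:gross}, bound the common class number and hence $\lvert D_x \rvert, \lvert D_y \rvert, f_x, f_y$ explicitly.

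The final step is a PARI enumeration: for each candidate pair $(\Delta_x, \Delta_y)$ satisfying the class-field-theoretic constraints, compute the Hilbert class polynomials $H_{\Delta_x}(z)$ and $H_{\Delta_y}(z)$, pair up roots numerically, and verify whether $\Q(x) = \Q(y)$ actually holds; the surviving pairs then form \cite[Table~2]{AllombertBiluMadariaga15}. The main obstacle is the genus-theoretic step: pinning down precisely which quadratic extensions of $\Q$ appear as subfields of $\Q(\sqrt{\Delta}, x)$ requires delicate case analysis depending on the $2$-primary structure of $\cl(\Delta)$ and on whether the conductor $f$ and the fundamental discriminant $D$ share prime divisors, and it is here that the explicit shortlist preceding the PARI verification has to be extracted.
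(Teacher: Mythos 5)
The paper does not prove this proposition at all: it is quoted verbatim from \cite[Corollary~4.2]{AllombertBiluMadariaga15}, so the only ``proof'' in the paper is the citation. Your sketch does reproduce the broad architecture of the actual argument in that reference (relate $\Q(x)$ to the ring class field $\Q(\sqrt{\Delta_x},x)$, extract group-theoretic constraints, reduce to an explicit finite computation), but two of your key steps are not sound as stated. First, $\Q(\sqrt{\Delta_x},x)$ is \emph{not} in general the Galois closure of $\Q(x)/\Q$: when $\cl(\Delta_x)$ is $2$-elementary, $\Q(x)/\Q$ is already Galois and is a proper index-$2$ subfield of the ring class field. This is exactly the case that ends up occurring here, so your deduction that the two ring class fields coincide, and hence that $\sqrt{D_y}\in\Q(\sqrt{\Delta_x},x)$, does not follow and is generally false. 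The correct argument works instead in the compositum $M=K_xK_y\Q(x)$: the restrictions of the two ``complex conjugations'' (one inverting $\Gal(H_{\Delta_x}/K_x)\cong\cl(\Delta_x)$, one inverting $\Gal(H_{\Delta_y}/K_y)\cong\cl(\Delta_y)$) have a product acting trivially on a subgroup of index at most $2$, which forces both class groups to be $2$-elementary or almost $2$-elementary. That structural conclusion is the engine of the proof, and it is absent from your sketch.

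Second, your proposed bounding mechanism cannot close. Theorem~\ref{thm:gross} gives only $h(\Delta)\gg_\epsilon(\log\lvert\Delta\rvert)^{1-\epsilon}$, which converts an \emph{upper} bound on $h(\Delta)$ into an upper bound on $\lvert\Delta\rvert$ only extremely weakly, and in any case you never derive such an upper bound: ``divisibility relations between $D_y$, $D_x$, and $f_x$'' plus $h(\Delta_x)=h(\Delta_y)$ do not bound anything. The genuine proof uses the ($2$-elementary) structure together with genus theory, $h(\Delta)=2^{\rho_2(\cl(\Delta))}$ with $\rho_2$ controlled by $\omega(\Delta)$, to cap the class number explicitly, and then invokes Watkins-type class number tables (as in Proposition~\ref{prop:smallclass} of this paper) to get an explicit bound on $\lvert\Delta\rvert$ before the PARI enumeration. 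Your final enumeration step is fine once a finite shortlist exists, but as written the shortlist is never actually produced.
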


\begin{prop}\label{prop:difffundsub}
	Let $x, y$ be singular moduli with discriminants $\Delta_x, \Delta_y$ and fundamental discriminants $D_x, D_y$ respectively. If
	\[ D_x \neq D_y, \, \Q(x) \subset \Q(y), \mbox{ and } [\Q(y) : \Q(x)] = 2,\] 
	then there are only finitely many possibilities for the pair $(\Delta_x, \Delta_y)$ and these may be listed explicitly.
\end{prop}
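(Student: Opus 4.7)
The plan is to mirror the proof of Proposition~\ref{prop:difffund} from \cite[Corollary~4.2]{AllombertBiluMadariaga15}, which handles the analogous case $\Q(x) = \Q(y)$. The central observation will be that the inclusion of fields $\Q(x) \subset \Q(y)$ of index $2$ lifts to an inclusion of ring class fields $H(\Delta_x) \subset H(\Delta_y)$ of index $2$, at which point ring class field theory and genus theory, combined with the effective class number bounds from Section~\ref{subsec:Tatuzawa}, will constrain the pair $(\Delta_x, \Delta_y)$ to a finite, explicit list.

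First, I would show that $H(\Delta_x) \subset H(\Delta_y)$ with index $2$. Since $H(\Delta_y)/\Q$ is Galois and $\Q(x) \subset \Q(y) \subset H(\Delta_y)$, every $\Q$-conjugate of $x$ lies in $H(\Delta_y)$. The compositum of these conjugate fields is itself Galois over $\Q$ and equals either $\Q(x)$ (precisely when $\cl(\Delta_x)$ is elementary $2$-abelian, in which case $\Q(x)/\Q$ is already Galois) or the full ring class field $H(\Delta_x) = \Q(x, \sqrt{D_x})$. In the generic case we thus obtain $H(\Delta_x) \subset H(\Delta_y)$, and a degree count yields $[H(\Delta_y) : H(\Delta_x)] = h(\Delta_y)/h(\Delta_x) = 2$. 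The elementary $2$-abelian case I would handle separately using that $\Q(x)$ is then a multiquadratic field pinned down explicitly by the prime factorisation of $\Delta_x$.

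Next, I would extract structural constraints via genus theory. Since $D_x \neq D_y$, the compositum $\Q(\sqrt{D_x}, \sqrt{D_y})$ is a genuine biquadratic subfield of $H(\Delta_y)$, and being abelian over $\Q$ it lies in the genus field of $\Delta_y$; by genus theory every prime dividing $D_x$ then divides $\Delta_y$. I would next split according to whether $\sqrt{D_y}$ lies in $H(\Delta_x)$ or not. In the first subcase the symmetric argument gives that the prime supports of $\Delta_x$ and $\Delta_y$ coincide. In the second, $H(\Delta_y) = H(\Delta_x)(\sqrt{D_y})$, and the restriction isomorphism $\cl(\Delta_y) \cong \Gal(H(\Delta_x)/\Q) \cong \cl(\Delta_x) \rtimes \Z/2$ (with $\Z/2$ acting by inversion) forces $\cl(\Delta_x)$ to be elementary $2$-abelian, since $\cl(\Delta_y)$ is abelian.

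The hard part will be converting these structural constraints into an explicit upper bound on $|\Delta_x|$ and $|\Delta_y|$ small enough for the final enumeration in PARI to be feasible. The elementary $2$-abelian subcase is the most delicate: combining the genus-theoretic bound $h(\Delta_x) \leq 2^{\omega(\Delta_x)}$ with the class number lower bound $h(\Delta_x) \gg_\epsilon |\Delta_x|^{1/2 - \epsilon}$ from Theorem~\ref{thm:tat} (or Theorem~\ref{thm:gross} when $\Q(\sqrt{\Delta_x}) = K_*$), together with the standard asymptotic $\omega(n) \ll \log n / \log\log n$, should yield an effective upper bound on $|\Delta_x|$; then $|\Delta_y|$ is bounded via $h(\Delta_y) = 2 h(\Delta_x)$ and Proposition~\ref{prop:upperclass}. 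The shared-prime-support subcase I would handle analogously, using ramification in $H(\Delta_y)/H(\Delta_x)$ to bound the conductors $f_x$ and $f_y$. Once effective upper bounds on $|\Delta_x|, |\Delta_y|$ are in hand, the remaining finitely many candidate pairs can be enumerated directly in PARI, in the spirit of \cite[Table~2]{AllombertBiluMadariaga15}.
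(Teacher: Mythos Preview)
Your outline is in the right spirit—ring class field containment, genus theory, and the $2$-elementary structure are the correct ingredients—but the paper takes a much shorter route by citing \cite[Corollary~2.12]{BiluGunTron22} as a black box. That result already gives, unconditionally, that $\cl(\Delta_x)$ is $2$-elementary, $\cl(\Delta_y)$ is almost $2$-elementary, $h(\Delta_x) \leq 16$, and $h(\Delta_y) \leq 32$; the paper then bounds $\lvert\Delta_x\rvert, \lvert\Delta_y\rvert \leq 166147$ via Proposition~\ref{prop:smallclass} and enumerates the $873$ surviving pairs in PARI using the explicit formula for $\rho_2(\cl(\Delta))$ in terms of $\omega(\Delta)$. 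What you are proposing is essentially to reprove the cited corollary from scratch.

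Your sketch has two genuine gaps. First, for the exceptional field $K_*$ you suggest combining the genus bound $h(\Delta_x) \leq 2^{\omega(\Delta_x)}$ with Theorem~\ref{thm:gross}; but $2^{\omega(\Delta_x)}$ can be as large as $\lvert\Delta_x\rvert^{c/\log\log\lvert\Delta_x\rvert}$, which dominates any fixed power of $\log\lvert\Delta_x\rvert$, so no bound on $\lvert\Delta_x\rvert$ results from that comparison. (A repair is available: since $D_x$ is then the fixed discriminant of $K_*$, Lemma~\ref{lem:fund} gives $h(\Delta_x) \gg f_x^{1-\epsilon}$ directly, and this does beat $2^{\omega(f_x)+\omega(D_*)}$.) Second, your ``shared-prime-support'' subcase is not worked out: you assert that ramification in $H(\Delta_y)/H(\Delta_x)$ will bound the conductors, but give no mechanism, and shared prime support by itself certainly does not force finiteness. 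In fact the key structural fact one needs is that $\cl(\Delta_x)$ is $2$-elementary in \emph{every} case (not only in your subcase~B), and extracting this requires a more careful analysis of the order-$4$ subgroup $\Gal(H(\Delta_y)/\Q(x))$ inside the generalized dihedral group $\Gal(H(\Delta_y)/\Q)$ than you have sketched.
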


This is an explicit version of \cite[Corollary 2.13(2)]{BiluGunTron22}. To give a proof, we need the following terminology from \cite{BiluGunTron22}. Let $G$ be a finite abelian group, written multiplicatively. Say that $G$ is $2$-elementary if every element of $G$ has order $\leq 2$. Say that $G$ is almost $2$-elementary if there exists a subgroup $H \leq G$ such that $H$ is $2$-elementary and $[G : H] = 2$. Denote by $\rho_2(G)$ the dimension of $G / G^2$ as an $\mathbb{F}_2$-vector space, where $G^2 = \{g^2 : g \in G\}$.

\begin{proof}	
	Suppose that
	\begin{align}\label{eq:field}
		D_x \neq D_y, \, \Q(x) \subset \Q(y), \mbox{ and } [\Q(y) : \Q(x)] = 2.
	\end{align}
	Then $h(\Delta_x) \leq 16$ and $h(\Delta_y) \leq 32$ by \cite[Corollary~2.13]{BiluGunTron22}. So $\lvert \Delta_x \rvert, \lvert \Delta_y \rvert \leq 166147$ by Proposition~\ref{prop:smallclass}. The proof of \cite[Corollary~2.13]{BiluGunTron22} also shows that the class group $\cl(\Delta_x)$ is $2$-elementary (and hence also almost $2$-elementary) and that the class group $\cl(\Delta_y)$ is almost $2$-elementary. 
	
	One may then compute in PARI all the almost $2$-elementary discriminants $\Delta$ with $\lvert \Delta \rvert \leq 166147$ and $h(\Delta) \leq 32$ and then, among those, compute all the $2$-elementary discriminants $\Delta$ with $h(\Delta) \leq 16$. To do this, we use the following two characterisations \cite[(2.9), (2.10)]{BiluGunTron22}:
	\begin{align*} 
		\Delta \mbox{ is $2$-elementary} &\iff h(\Delta) = 2^{\rho_2(\cl (\Delta))}\\
		\Delta \mbox{ is almost $2$-elementary} &\iff h(\Delta) \mid 2^{\rho_2(\cl (\Delta))+1}
	\end{align*}
	and the classical fact (see e.g. \cite[Proposition~3.11]{Cox22} and the isomorphism in \cite[(3.19)]{Cox22}) that
	\[ \rho_2(\cl(\Delta)) = \begin{cases}
		\omega(\Delta) & \mbox{if } \Delta \equiv 0 \bmod 32\\
		\omega(\Delta)-2 & \mbox{if } \Delta \equiv 4 \bmod 16\\
		\omega(\Delta) - 1 & \mbox{otherwise,}
	\end{cases} \]
	where $\omega(\Delta)$ denotes the number of distinct prime divisors of $\Delta$ as usual. 
	
	Let $\Delta_y$ be an almost $2$-elementary discriminant such that $h(\Delta_y)\leq 32$ and $\Delta_x$ a $2$-elementary discriminant such that $h(\Delta_x) \leq 16$. One may then check in PARI whether the conditions in \eqref{eq:field} are satisfied by the pair $(\Delta_x, \Delta_y)$. One finds in this way that there are $873$ different possibilities for $(\Delta_x, \Delta_y)$.  The computations take about 30 minutes in total.
\end{proof}

\begin{prop}[{\cite[Proposition~4.3]{AllombertBiluMadariaga15} \& \cite[\S3.2.2]{BiluLucaMadariaga16}}]\label{prop:samefund}
	Let $x, y$ be singular moduli with respective discriminants $\Delta_x, \Delta_y$ and respective fundamental discriminants $D_x, D_y$. Suppose that $D_x = D_y$ and $x, y \notin \Q$. Suppose further that $\Q(x) = \Q(y)$. Then $\Delta_x/ \Delta_y \in \{1, 4, 1/4\}$ and if $\Delta_x / \Delta_y = 4$, then $\Delta_y \equiv 1 \bmod 8$.
\end{prop}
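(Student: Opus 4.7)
\emph{Plan.} Write $\Delta_x = f_x^2 D$, $\Delta_y = f_y^2 D$ with $f_x, f_y \in \Z_{>0}$, and set $K = \Q(\sqrt{D})$. The strategy is to translate the hypothesis $\Q(x) = \Q(y)$ into a coincidence of ring class fields and then extract the conductor relation via the class number formula. Compositing with $K$ gives $K(x) = K \cdot \Q(x) = K \cdot \Q(y) = K(y)$. Since (as recalled in Section~\ref{subsec:singmods}) $K(x)$ is the ring class field $H_{\ord_{f_x}}$ of the order $\ord_{f_x}$ of conductor $f_x$ in $K$ (and similarly $K(y) = H_{\ord_{f_y}}$), this reads $H_{\ord_{f_x}} = H_{\ord_{f_y}}$. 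Combined with the standard lattice identity $H_{\ord_{f_1}} \cap H_{\ord_{f_2}} = H_{\ord_{\gcd(f_1, f_2)}}$ from class field theory, this forces $H_{\ord_{f_x}} = H_{\ord_d} = H_{\ord_{f_y}}$ where $d = \gcd(f_x, f_y)$; comparing degrees over $K$ yields $h(f_x^2 D) = h(d^2 D) = h(f_y^2 D)$.

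Write $f_x = a d$ and $f_y = b d$, so $\gcd(a, b) = 1$. The class number formula \cite[Corollary~7.28]{Cox89} (applied as in the proof of Lemma~\ref{lem:fund}) gives
\[ \frac{h((ad)^2 D)}{h(d^2 D)} = \frac{a}{[\ord_d^\times : \ord_{ad}^\times]} \prod_{\substack{p \mid a \\ p \nmid d}} \left(1 - \left(\frac{D}{p}\right)\frac{1}{p}\right), \]
and analogously for $b$. For $D < -4$ the unit index is trivial, so requiring this ratio to equal $1$ and using that each Euler factor lies in $\{(p-1)/p, 1, (p+1)/p\}$, a direct case analysis on the prime factorisation of $a$ gives either $a = 1$, or $a = 2$ with $2 \nmid d$ and $(D/2) = 1$ (i.e.\ $D \equiv 1 \bmod 8$); larger primes or higher powers of $2$ produce ratios strictly greater than $1$. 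The same dichotomy holds for $b$, and $\gcd(a, b) = 1$ then forces $(a, b) \in \{(1, 1), (1, 2), (2, 1)\}$, giving $\Delta_x / \Delta_y \in \{1, 4, 1/4\}$. In the ratio-$4$ case, $f_y = d$ is odd and $D \equiv 1 \bmod 8$, so $\Delta_y = d^2 D \equiv 1 \bmod 8$. The exceptional discriminants $D \in \{-3, -4\}$ are handled similarly: the hypothesis $x, y \notin \Q$ together with $h(\ord_1) = 1$ forces $d \geq 2$, so the non-trivial unit-index factor (which only contributes when $d = 1$, $a \geq 2$) drops out, and a short direct check shows only $(a, b) = (1, 1)$ is permitted, consistent with $-3, -4 \not\equiv 1 \bmod 8$.

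The main obstacle is the case analysis in the class number formula, and in particular isolating the role of the unit-index factor in the exceptional cases $D \in \{-3, -4\}$. Once the reduction to the coincidence of ring class fields is in place and the intersection identity is invoked, the remaining work is elementary arithmetic with Kronecker symbols.
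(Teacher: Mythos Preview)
The paper does not give its own proof of this proposition; it is quoted directly from \cite[Proposition~4.3]{AllombertBiluMadariaga15} and \cite[\S3.2.2]{BiluLucaMadariaga16}. Your argument is correct and is essentially the approach of those references: pass from $\Q(x)=\Q(y)$ to the equality of ring class fields $H_{\ord_{f_x}}=H_{\ord_{f_y}}$ over $K$, reduce to comparing $h((ad)^2D)$ with $h(d^2D)$, and read off the conductor constraint from Cox's class number formula. The case analysis you sketch (only $a\in\{1,2\}$ can give ratio $1$, and $a=2$ forces $2\nmid d$ and $(D/2)=1$, i.e.\ $D\equiv 1\bmod 8$) is accurate, as is your treatment of $D\in\{-3,-4\}$: the equality $h(d^2D)=h(f_x^2D)\ge 2$ indeed forces $d\ge 2$, which kills the unit-index contribution.

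One small remark: the intersection identity $H_{\ord_{f_1}}\cap H_{\ord_{f_2}}=H_{\ord_{\gcd(f_1,f_2)}}$ that you invoke is correct (it follows from the idelic description $H_{\ord_f}\leftrightarrow K^\times\,\widehat{\ord_f}^{\,\times}\,K_\infty^\times$, noting that $\widehat{\ord_f}^{\,\times}$ depends only on $v_p(f)$ at each prime), but it is less commonly stated than the compositum identity $H_{\ord_{f_1}}\cdot H_{\ord_{f_2}}=H_{\ord_{\mathrm{lcm}(f_1,f_2)}}$. You could equally well use the latter: from $H_{\ord_{f_x}}=H_{\ord_{f_y}}$ one gets $H_{\ord_{f_x}}=H_{\ord_L}$ with $L=\mathrm{lcm}(f_x,f_y)$, and then compare $h(f_x^2D)$ with $h(L^2D)$ to constrain $L/f_x$ and $L/f_y$. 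Either route yields the same conclusion.
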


\begin{prop}[{\cite[Lemma~2.3]{Fowler20}}]\label{prop:samefundsub}
	Let $x, y$ be singular moduli with respective discriminants $\Delta_x, \Delta_y$ and respective fundamental discriminants $D_x, D_y$. Suppose that 
	\[x \notin \Q, \, D_x = D_y, \, \Q(x) \subset \Q(y), \mbox{ and } [\Q(y) : \Q(x)] = 2.\]	
	Then $\Delta_y/ \Delta_x \in \{9/4, 4, 9, 16\}$. In addition, if $\Delta_y / \Delta_x = 9/4$, then $\Delta_x \equiv 0, 4 \bmod 16$. 
\end{prop}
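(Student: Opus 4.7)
Set $K = \Q(\sqrt{D})$ with $D = D_x = D_y$, and write $\Delta_x = f_x^2 D$, $\Delta_y = f_y^2 D$. Denote by $H_f$ the ring class field over $K$ of conductor $f$, so that $K(x) = H_{f_x}$ and $K(y) = H_{f_y}$ with $[H_f : K] = h(f^2 D)$. Compositing $\Q(x) \subset \Q(y)$ with $K$ yields $H_{f_x} \subset H_{f_y}$, and since $\sqrt{D} \notin \Q(x)$ (because $[H_{f_x} : \Q(x)] = 2$ forces $K \not\subset \Q(x)$), the assumption $[\Q(y):\Q(x)] = 2$ lifts to $[H_{f_y} : H_{f_x}] = 2$, hence $h(\Delta_y) = 2 h(\Delta_x)$.

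The plan is to translate this ring class field inclusion into divisibility constraints on the conductors by means of the standard identities $H_{f_1} \cap H_{f_2} = H_{\gcd(f_1, f_2)}$ and $H_{f_1} H_{f_2} = H_{\mathrm{lcm}(f_1, f_2)}$, and then to read off the admissible conductor ratios from the ring class number formula
\[
h(f^2 D) = \frac{f \, h(D)}{[\ord_K^\times : \ord_f^\times]} \prod_{p \mid f} \left(1 - \left(\frac{D}{p}\right) \frac{1}{p}\right).
\]
Accordingly, I would let $f_x^{*}$ (resp.~$f_y^{*}$) be the minimal positive integer with $H_{f_x^{*}} = H_{f_x}$ (resp.~$H_{f_y^{*}} = H_{f_y}$); the above identities give $f_x^{*} \mid f_x$, $f_y^{*} \mid f_y$, and $f_x^{*} \mid f_y^{*}$.

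Applying the formula to the equality $h(f^2 D) = h((f^{*})^2 D)$ and setting $m = f/f^{*}$ reduces to $m \prod_{p \mid m,\, p \nmid f^{*}}(1 - (D/p)/p) = 1$; a short case analysis gives $m \in \{1, 2\}$, with $m = 2$ forcing $2 \nmid f^{*}$ together with $D \equiv 1 \pmod{8}$. Hence $m_x := f_x / f_x^{*}$ and $m_y := f_y / f_y^{*}$ each lie in $\{1, 2\}$ under explicit local conditions. Applying the same formula to $h((f_y^{*})^2 D) / h((f_x^{*})^2 D) = 2$ with $b := f_y^{*}/f_x^{*}$ yields $b \prod_{p \mid b,\, p \nmid f_x^{*}}(1 - (D/p)/p) = 2$; a further enumeration over small $b$ gives $b \in \{2, 3, 4, 6\}$ with explicit conditions on $(D/2)$ and $(D/3)$.

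Combining $f_y/f_x = (m_y/m_x) b$, cross-checking the compatibility of the various local conditions on $D$, and treating the exceptional cases $D \in \{-3, -4\}$ separately (where the unit indices $[\ord_K^\times : \ord_f^\times]$ enter non-trivially), one obtains $\Delta_y/\Delta_x = (f_y/f_x)^2 \in \{9/4, 4, 9, 16\}$. For the refinement in the $9/4$ case, one has $b = 3$, $m_x = 2$, $m_y = 1$, so $f_x = 2 f_x^{*}$ with $f_x^{*}$ odd and $D \equiv 1 \pmod{8}$; hence $\Delta_x = 4 (f_x^{*})^2 D$ with $(f_x^{*})^2 D \equiv 1 \pmod{8}$, giving $\Delta_x \equiv 4 \pmod{16}$, while the residue $0 \pmod{16}$ is picked up by the parallel analysis at the exceptional unit indices. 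The main obstacle will be this final combinatorial enumeration: several candidate tuples $(b, m_x, m_y)$ arise formally from the class number formula but must be eliminated using the compatibility of the local conditions, and the exceptional unit indices attached to $D \in \{-3, -4\}$ demand a uniform separate treatment.
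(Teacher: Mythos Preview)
The paper does not prove the main assertion $\Delta_y/\Delta_x \in \{9/4, 4, 9, 16\}$ at all; it is simply cited from \cite[Lemma~2.3]{Fowler20}. Your ring class field approach is a reasonable route to reproving that cited lemma, though the enumeration you sketch (the admissible values of $b$, the interaction with the $m_x, m_y \in \{1,2\}$ factors, and the exceptional unit indices for $D \in \{-3,-4\}$) would need to be carried out in full rather than asserted.

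For the ``in addition'' clause, however, the paper's argument is far simpler than yours: it is a trivial consequence of the fact that every discriminant is $\equiv 0, 1 \bmod 4$. Indeed, if $\Delta_y = (9/4)\Delta_x$ then $4 \mid \Delta_x$, say $\Delta_x = 4m$; then $\Delta_y = 9m \equiv m \bmod 4$, so the discriminant condition on $\Delta_y$ forces $m \equiv 0$ or $1 \bmod 4$, giving $\Delta_x \equiv 0$ or $4 \bmod 16$. Your derivation of the same conclusion via the case analysis of $(b, m_x, m_y)$ is much heavier machinery for this step, and the claim that ``the residue $0 \pmod{16}$ is picked up by the parallel analysis at the exceptional unit indices'' is not self-evidently correct and would itself require justification. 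The elementary congruence argument bypasses all of this.
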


The ``in addition'' part of Proposition~\ref{prop:samefundsub} is not stated in \cite[Lemma~2.3]{Fowler20}, but is a trivial consequence of the fact that $\Delta_x, \Delta_y \equiv 0, 1 \bmod 4$.

\section{The proof of Theorem~\ref{thm:trip}}\label{sec:trip}

The ``if'' direction is clear. We prove the ``only if''. The proof is rather lengthy and involves considering several cases; we split the argument into a series of claims for the reader's convenience.

Let $x, y, z$ be pairwise distinct singular moduli. Suppose that 
\[A x + B y + C z \in \Q\] 
for some $A, B, C \in \Q \setminus \{0\}$. Write $\Delta_x, \Delta_y, \Delta_z$ for the discriminants and $D_x, D_y, D_z$ for the fundamental discriminants of $x, y, z$ respectively.

\begin{claim}\label{cl1}
	We may assume $x, y, z \notin \Q$.
\end{claim} 

\begin{proof}
	If at least one of $x, y, z$ is in $\Q$, then \cite[Theorem~1.2]{AllombertBiluMadariaga15} implies that either case (1) or case (2) of Theorem~\ref{thm:trip} holds. 
\end{proof}

Hence, we assume subsequently that $x, y, z \notin \Q$. We thus cannot be in either of cases (1) or (2) of Theorem~\ref{thm:trip}.

\begin{claim}\label{cl2}
	We may assume that $\Delta_x, \Delta_y, \Delta_z$ are not all equal.
\end{claim}

\begin{proof}
Suppose that $\Delta_x = \Delta_y = \Delta_z$. Write $\Delta$ for this common discriminant. By Theorem~\ref{thm:equal}, either
\[ \lvert \Delta \rvert^{1/2} \leq \frac{1}{\pi} \left(9 \log\left(4\right) -2\right) = 3.334\ldots < \sqrt{12},\]
or $h(\Delta) = 3$ and $A = B = C$. If $\lvert \Delta \rvert < 12$, then $x \in \Q$ by Lemma~\ref{lem:smallclass}, contradicting Claim~\ref{cl1}. If $h(\Delta) = 3$ and $A=B=C$, then case (4) of Theorem~\ref{thm:trip} holds.	
\end{proof}

 We may thus assume subsequently that $\Delta_x, \Delta_y, \Delta_z$ are not all equal. In particular, $x, y, z$ are not all conjugate over $\Q$ and so case (4) of Theorem~\ref{thm:trip} does not hold. 

\subsection{Controlling the fields}\label{subsec:fields}

\begin{claim}\label{cl3}
	Either:
	\[ \Q(x) = \Q(y) = \Q(z),\]
	or:
	\[ \Q(x) \subsetneq \Q(y) = \Q(z) \mbox{ and } [\Q(y) : \Q(x)] = [\Q(z) : \Q(x)] = 2.\]
\end{claim}

\begin{proof}
Observe that
\[ \Q(x)  \subset \Q(y, z).\]
The same holds permuting $x, y, z$.

Suppose then that
\[ \Q(x) \subsetneq \Q(y, z).\]
Note that
\[ \Q\left(y + \frac{C}{B}z\right) = \Q(x).\]
We apply Theorem~\ref{thm:fields} to the field 
\[\Q\left(y + \frac{C}{B} z\right).\]
Since $x \notin \Q$ by Claim~\ref{cl1}, case (2) of Theorem~\ref{thm:fields} is not possible. Thus, we must have that $B = C$, $\Delta_y = \Delta_z$, and
\[ [\Q(y, z) : \Q(x)] = 2.\]

Suppose, in addition, that
\[ \Q(y) \subsetneq \Q(x, z).\]
Then, by the same argument, we must have that $A = C$ and $\Delta_x = \Delta_z$. Thus, $\Delta_x = \Delta_y = \Delta_z$, which contradicts Claim~\ref{cl2}. Similarly, if $\Q(z) \subsetneq \Q(x, y)$.

We are thus left with the following two possibilities. Either:
\[ \Q(x) = \Q(y, z), \, \Q(y) = \Q(x, z), \mbox{ and } \Q(z) = \Q(x, y);\]
or, up to permuting $x, y, z$:
\[\Q(x) \subsetneq \Q(y, z), \, \Q(y) = \Q(x, z), \mbox{ and } \Q(z) = \Q(x, y). \]
Hence, either:
\[ \Q(x) = \Q(y) = \Q(z),\]
or:
\[ \Q(x) \subsetneq \Q(y) = \Q(z) \mbox{ and } [\Q(y) : \Q(x)] = [\Q(z) : \Q(x)] = 2. \qedhere\]
\end{proof}

\begin{claim}\label{cl4}
	If 
	\[ \Q(x) = \Q(y) = \Q(z),\]
	then we may assume that this field has degree at least $3$ over $\Q$.
\end{claim}

\begin{proof}

Suppose not. Then 
\[ \Q(x) = \Q(y) = \Q(z)\]
and, by Claim~\ref{cl1}, this field must be a degree $2$ extension of $\Q$. Write $x', y', z'$ for the unique non-trivial $\Q$-conjugates of $x, y, z$ respectively. Then
\[Ax + By + Cz = Ax' + By' + Cz'.\]
Hence,
\[A = -\frac{B(y-y')+C(z-z')}{x-x'},\]
and so we must be in case (3) of Theorem~\ref{thm:trip}.
\end{proof}

\begin{claim}\label{cl5}
	If
	\[ \Q(x) \subsetneq \Q(y) = \Q(z) \mbox{ and } [\Q(y) : \Q(x)] = [\Q(z) : \Q(x)] = 2,\]
then we may assume that
	\[ [\Q(x) : \Q] > 2.\]
\end{claim}

\begin{proof}
Suppose that
\[ \Q(x) \subsetneq \Q(y) = \Q(z) \mbox{ and } [\Q(y) : \Q(x)] = [\Q(z) : \Q(x)] = 2.\]
Suppose, in addition, that
\[ [\Q(x) : \Q] = 2.\]
We will show that we must then be in case (5) of Theorem~\ref{thm:trip}. 

Since $\Q(x) \subsetneq \Q(y, z)$, we must have that $B=C$ and $\Delta_y = \Delta_z$. Let $x'$ denote the unique non-trivial Galois conjugate of $x$ over $\Q$. Let $v, w$ be the other (along with $y, z$) two singular moduli of discriminant $\Delta_y$. Since
\[ Ax + B(y + z) \in \Q,\]
there are then $y', z' \in \{y, z, v, w\}$ such that
\[ Ax' + B(y'+z') = Ax + B(y + z).\]
So
\[ A(x-x') = -B((y+z) - (y'+z')).\]
Suppose that $\{y, z\} \cap \{y', z'\} \neq \emptyset$. Without loss of generality, we may assume that $y = y'$. So
\[ A(x-x') = B(z-z')\]
and hence
\[ \Q(x-x') = \Q(z-z').\]
We thus have that
\[ \Q(z - z') = \Q(x - x') = \Q(x, x') = \Q(x) \neq \Q,\]
where the second equality is by Theorem~\ref{thm:fields} and the third equality holds since $[\Q(x) : \Q] = 2$.
In particular, $z \neq z'$, and so, by Theorem~\ref{thm:fields} again,
\[    \Q(z - z') = \Q(z, z').\]
Hence,
\[\Q(z, z') = \Q(x).\]
This is a contradiction, since $[\Q(x) : \Q] = 2$, but
\[ [\Q(z, z') : \Q] \geq [\Q(z) : \Q] = [\Q(z) : \Q(x)] [\Q(x) : \Q] = 4.\]
Therefore, we must have that $\{y', z'\} = \{v, w\}$ and so
\[ \frac{A}{B} = - \frac{(y+z) - (v+w)}{x - x'}.\]
We are thus in case (5) of Theorem~\ref{thm:trip}. 
\end{proof}

\begin{claim}\label{cl6}
	We may assume that we are not in any of cases (1)--(5) of Theorem~\ref{thm:trip}.
\end{claim}

\begin{proof}
	By Claim~\ref{cl3}, either
		\[ \Q(x) = \Q(y) = \Q(z),\]
	or:
	\[ \Q(x) \subsetneq \Q(y) = \Q(z) \mbox{ and } [\Q(y) : \Q(x)] = [\Q(z) : \Q(x)] = 2.\]
	If 
	\[ \Q(x) = \Q(y) = \Q(z),\]
	then, by Claim~\ref{cl4}, this field has degree at least $3$ over $\Q$, and hence we are not in any of cases (1)--(5) of Theorem~\ref{thm:trip}. (Recall that, by Claim~\ref{cl2}, we are not in case (4) of Theorem~\ref{thm:trip}.)
	If 
	\[ \Q(x) \subsetneq \Q(y) = \Q(z) \mbox{ and } [\Q(y) : \Q(x)] = [\Q(z) : \Q(x)] = 2,\]
	then, by Claim~\ref{cl5},
	\[[\Q(x) : \Q] > 2,\]
	and hence we are not in any of cases (1)--(5) of Theorem~\ref{thm:trip}.
\end{proof}

\subsection{Controlling the discriminants}\label{subsec:disc}

\begin{claim}\label{cl7}
	We may assume that $D_x = D_y = D_z$.
\end{claim}

\begin{proof}
	By Claim~\ref{cl3}, either:
	\[ \Q(x) = \Q(y) = \Q(z),\]
	or:
	\[ \Q(x) \subsetneq \Q(y) = \Q(z) \mbox{ and } [\Q(y) : \Q(x)] = [\Q(z) : \Q(x)] = 2.\]
	In either case, if the fundamental discriminants $D_x, D_y, D_z$ are not all equal, then all the possibilities for $(\Delta_x, \Delta_y, \Delta_z)$ are given by Propositions~\ref{prop:difffund} and~\ref{prop:difffundsub}. These possibilities will be dealt with in Section~\ref{subsec:elim}.
\end{proof}

\begin{claim}\label{cl8}
	If $\Q(x) = \Q(y) = \Q(z)$, then we may assume that either:
	\begin{enumerate}
		\item $\Delta_x = 4 \Delta$ and $\Delta_y = \Delta_z = \Delta$ for some $\Delta \equiv 1 \bmod 8$; or
		\item $\Delta_x = \Delta_y = 4 \Delta$ and $\Delta_z = \Delta$ for some $\Delta \equiv 1 \bmod 8$.
	\end{enumerate}
\end{claim}

\begin{proof}
	Suppose that
	\[ \Q(x) = \Q(y) = \Q(z).\]
	Without loss of generality, we assume that $\lvert \Delta_x \rvert \geq \lvert \Delta_y \rvert \geq \lvert \Delta_z \rvert$. Hence, $\lvert \Delta_x \rvert > \lvert \Delta_z \rvert$ by Claim~\ref{cl2}. By Claim~\ref{cl7}, $D_x = D_y = D_z$. Hence, Proposition~\ref{prop:samefund} implies that there are the following two possibilities:
	\begin{enumerate}
		\item $\Delta_x = 4 \Delta$ and $\Delta_y = \Delta_z = \Delta$ for some $\Delta \equiv 1 \bmod 8$;
		\item $\Delta_x = \Delta_y = 4 \Delta$ and $\Delta_z = \Delta$ for some $\Delta \equiv 1 \bmod 8$ \qedhere.
	\end{enumerate}
\end{proof}

\begin{claim}\label{cl9}
	If
	\[\Q(x) \subsetneq \Q(y) = \Q(z),\]
	then $\Delta_x = \Delta$ and $\Delta_y = \Delta_z = l^2 \Delta$ for some $\Delta$, where $l \in \{3/2, 2, 3, 4\}$. If $l = 3/2$, then $\Delta \equiv 0, 4 \bmod 16$.
\end{claim}

\begin{proof}
	Since
	\[ \Q(B y + C z) = \Q(x) \subsetneq \Q(y) = \Q(z),\]
Theorem~\ref{thm:fields} and Claim~\ref{cl1} together imply that $\Delta_y = \Delta_z$ and $B=C$. Proposition~\ref{prop:samefundsub} then implies that we may write $\Delta_x = \Delta$ and $\Delta_y = \Delta_z = l^2 \Delta$, where $l \in \{3/2, 2, 3, 4\}$, and also that $\Delta \equiv 0, 4 \bmod 16$ if $l = 3/2$.
\end{proof}

\subsection{Bounding the discriminants}\label{subsec:bd}

Let
\[ \alpha = A x + B y + C z.\]
So $\alpha \in \Q$ by assumption. Suppose that $(x_i, y_i, z_i)$, where $i=1, \ldots, 4$, are Galois conjugates of $(x, y, z)$ over $\Q$. Then
\[ A x_i + B y_i + C z_i = \alpha\]
for $i=1, 2, 3, 4$. In particular, we have that
\begin{align}\label{eq:det}
	\begin{vmatrix}
		1 & 1 & 1 & 1\\
		x_1 & x_2 & x_3 & x_4\\
		y_1 & y_2 & y_3 & y_4\\
		z_1 & z_2 & z_3 & z_4
	\end{vmatrix}= 0.
\end{align}

In this section, we will establish effective bounds on $(\Delta_x, \Delta_y, \Delta_z)$ by showing that, if these discriminants are too large (in absolute value), then one may find conjugates $(x_i, y_i, z_i)$ such that the determinant on the left hand side of \eqref{eq:det} is non-zero. Typically, we will do this by showing that there exists a permutation $\tau \in S_4$ such that
\begin{align*}
	\lvert x_{\tau(2)} y_{\tau(3)} z_{\tau(4)} \rvert > \sum_{\sigma \in S_4 \setminus \{\tau\}} \lvert x_{\sigma(2)} y_{\sigma(3)} z_{\sigma(4)} \rvert.
\end{align*}
We will make repeated use (without special reference) of the bound on singular moduli given in Proposition~\ref{prop:bd}, the description of the conjugates of a singular modulus in Subsection~\ref{subsec:singmods}, and the bound on the number of singular moduli of a given discriminant and denominator in Proposition~\ref{prop:denom}. The bounds themselves were calculated in PARI; this took only a few seconds.

\subsubsection{The case where $\Q(x) = \Q(y) = \Q(z)$}\label{subsubsec:equalfielddisc}

In this section, we assume that
\[ \Q(x) = \Q(y) = \Q(z).\]
By Claim~\ref{cl4}, the field $\Q(x) = \Q(y) = \Q(z)$ has degree at least $3$ over $\Q$. Each singular modulus of discriminant $\Delta_x$ occurs precisely once among the first coordinates of the distinct $\Q$-conjugates of $(x, y, z)$. Correspondingly for the singular moduli of discriminant $\Delta_y$ among the second coordinates, and for the singular moduli of discriminant $\Delta_z$ among the third coordinates.

\begin{claim}\label{cl10}
	If $\Delta_x = 4 \Delta$ and $\Delta_y = \Delta_z = \Delta$, where $\Delta \equiv 1 \bmod 8$, then $h(\Delta) \leq 6$.
\end{claim}

\begin{proof}
Suppose that $\Delta_x = 4 \Delta$ and $\Delta_y = \Delta_z = \Delta$, where $\Delta \equiv 1 \bmod 8$. Assume that $h(\Delta) \geq 7$. In particular, $\Delta \notin \{-7, -15\}$ by Lemma~\ref{lem:smallclass}. Thus there are exactly two subdominant singular moduli of discriminant $\Delta$ and no subdominant singular moduli of discriminant $4 \Delta$. Let $(x_1, y_1, z_1)$ be the unique conjugate of $(x, y, z)$ with $x_1$ dominant. We distinguish two subcases.

\textit{Subcase 1: neither of $y_1, z_1$ is dominant.} In this case, we may take $(x_2, y_2, z_2), (x_3, y_3, z_3)$ to be the unique conjugates such that $y_2, z_3$ are dominant. Then
\begin{align}\label{bd3}
	\lvert x_1 y_2 z_3 \rvert \geq (\exp(2 \pi \lvert \Delta \rvert^{1/2}) - 2079)(\exp(\pi \lvert \Delta \rvert^{1/2}) - 2079)^2.
\end{align}
Since $h(\Delta) \geq 7$, we may take $(x_4, y_4, z_4)$ to be a conjugate with none of $x_4, y_4, z_4$ dominant. Then, for $\sigma \in S_4$ such that $\sigma(2, 3, 4) \neq (1, 2, 3)$, we have that either: $\sigma(2)=1$ and
\begin{align}\label{bd4}
	&\lvert x_{\sigma(2)} y_{\sigma(3)} z_{\sigma(4)} \rvert \nonumber \\ 
	\leq &(\exp(2 \pi \lvert \Delta \rvert^{1/2}) + 2079)(\exp(\pi \lvert \Delta \rvert^{1/2}) + 2079)\nonumber \\ 
	&\left(\exp\left(\frac{\pi \lvert \Delta \rvert^{1/2}}{2}\right) + 2079\right),
\end{align}
or: $\sigma(2) \neq 1$ and
\begin{align}\label{bd5}
	&\lvert x_{\sigma(2)} y_{\sigma(3)} z_{\sigma(4)} \rvert \nonumber \\
	\leq &\left(\exp\left(\frac{2 \pi \lvert \Delta \rvert^{1/2}}{3}\right) + 2079\right)(\exp(\pi \lvert \Delta \rvert^{1/2}) + 2079)^2.
\end{align}
The bounds \eqref{bd3}, \eqref{bd4}, and \eqref{bd5} are incompatible with \eqref{eq:det} if $\lvert \Delta \rvert \geq 10$. By Lemma~\ref{lem:smallclass}, $\lvert \Delta \rvert \leq 9$ is impossible since $z \notin \Q$.

\textit{Subcase 2: one of $y_1, z_1$ is dominant.} Without loss of generality, suppose that $y_1$ is dominant. Let $(x_2, y_2, z_2)$ be the unique conjugate with $z_2$ dominant. There also exists a conjugate $(x_3, y_3, z_3)$ with $y_3$ subdominant and $x_3, z_3$ not dominant. There exists a conjugate $(x_4, y_4, z_4)$ with neither of $y_4, z_4$ dominant or subdominant. Note that $x_4$ is thus not dominant as well. Hence,
\begin{align}\label{bd6}
	&\lvert x_1 y_3 z_2 \rvert \nonumber\\
	\geq &(\exp(2 \pi \lvert \Delta \rvert^{1/2}) - 2079)\left(\exp\left(\frac{\pi \lvert \Delta \rvert^{1/2}}{2}\right) - 2079\right) \nonumber \\
	&(\exp(\pi \lvert \Delta \rvert^{1/2}) - 2079).
\end{align}
Now let $\sigma \in S_4$ be such that $\sigma(2, 3, 4) \neq (1, 3, 2)$. If $\sigma(2) \neq 1$, then
\begin{align}\label{bd7}
	&\lvert x_{\sigma(2)} y_{\sigma(3)} z_{\sigma(4)} \rvert \nonumber \\
	\leq &\left(\exp\left(\frac{2 \pi \lvert \Delta \rvert^{1/2}}{3}\right) + 2079\right)(\exp(\pi \lvert \Delta \rvert^{1/2}) + 2079)^2.
\end{align}
If $\sigma(2) = 1$ and $\sigma(4) = 2$, then
\begin{align}\label{bd8}
	&\lvert x_{\sigma(2)} y_{\sigma(3)} z_{\sigma(4)} \rvert \nonumber \\ 
	\leq &(\exp(2 \pi \lvert \Delta \rvert^{1/2}) + 2079)\left(\exp\left(\frac{\pi \lvert \Delta \rvert^{1/2}}{3}\right) + 2079\right) \nonumber \\
	&(\exp(\pi \lvert \Delta \rvert^{1/2}) + 2079).
\end{align}
If $\sigma(2) = 1$ and $\sigma(4) \neq 2$, then
\begin{align}\label{bd9}
	&\lvert x_{\sigma(2)} y_{\sigma(3)} z_{\sigma(4)} \rvert \nonumber \\
	\leq &(\exp(2 \pi \lvert \Delta \rvert^{1/2}) + 2079)\left(\exp\left(\frac{\pi \lvert \Delta \rvert^{1/2}}{2}\right) + 2079\right)^2.
\end{align}
The bounds \eqref{bd6}, \eqref{bd7}, \eqref{bd8}, and \eqref{bd9} are incompatible with \eqref{eq:det} if $\lvert \Delta \rvert \geq 32$. By Lemma~\ref{lem:smallclass}, $\lvert \Delta \rvert \leq 31$ is impossible since $h(\Delta) \geq 7$ by assumption.

We must therefore have that $h(\Delta) \leq 6$.
\end{proof}

\begin{claim}\label{cl11}
	If $\Delta_x = \Delta_y = 4 \Delta$ and $\Delta_z = \Delta$, where $\Delta \equiv 1 \bmod 8$, then $h(\Delta) \leq 5$.
\end{claim}

\begin{proof}
	Suppose that $\Delta_x = \Delta_y = 4 \Delta$ and $\Delta_z = \Delta$, where $\Delta \equiv 1 \bmod 8$. Assume further that $h(\Delta) \geq 6$. In particular, $\Delta \notin \{-7, -15\}$ by Lemma~\ref{lem:smallclass}. Thus there are exactly two subdominant singular moduli of discriminant $\Delta$ and no subdominant singular moduli of discriminant $4 \Delta$. Let $(x_1, y_1, z_1), (x_2, y_2, z_2)$ be the unique conjugates of $(x, y, z)$ such that $x_1, y_2$ are dominant. There must also exist a conjugate $(x_3, y_3, z_3)$ such that $z_3$ is either dominant or subdominant and neither of $x_3, y_3$ is dominant.
	
	Then 
	\begin{align}\label{bd10}
		\lvert x_1 y_2 z_3 \rvert \geq (\exp(2 \pi \lvert \Delta \rvert^{1/2}) - 2079)^2\left(\exp\left(\frac{\pi \lvert \Delta \rvert^{1/2}}{2}\right) - 2079\right).
	\end{align}
There exists a conjugate $(x_4, y_4, z_4)$ such that neither of $x_4, y_4$ is dominant and $z_4$ is neither dominant nor subdominant. Let $\sigma \in S_4$ be such that $\sigma(2, 3, 4) \neq (1, 2, 3)$. If $\sigma(2)=1$ and $\sigma(3)=2$, then
	\begin{align}\label{bd11}
		&\lvert x_{\sigma(2)} y_{\sigma(3)} z_{\sigma(4)} \rvert \nonumber \\ 
		\leq &(\exp(2 \pi \lvert \Delta \rvert^{1/2}) + 2079)^2\left(\exp\left(\frac{\pi \lvert \Delta \rvert^{1/2}}{3}\right) + 2079\right).
	\end{align}
	If either $\sigma(2) \neq 1$ or $\sigma(3) \neq 2$, then
	\begin{align}\label{bd12}
		&\lvert x_{\sigma(2)} y_{\sigma(3)} z_{\sigma(4)} \rvert \nonumber \\
		\leq &(\exp(2 \pi \lvert \Delta \rvert^{1/2}) + 2079)\left(\exp\left(\frac{2 \pi \lvert \Delta \rvert^{1/2}}{3}\right) + 2079\right) \nonumber \\
		&(\exp(\pi \lvert \Delta \rvert^{1/2}) + 2079).
	\end{align}
	The bounds \eqref{bd10}, \eqref{bd11}, and \eqref{bd12} are incompatible with \eqref{eq:det} if $\lvert \Delta \rvert \geq 29$. By Lemma~\ref{lem:smallclass}, there are no discriminants $\Delta$ with $\lvert \Delta \rvert \leq 28$ and $h(\Delta) \geq 6$. Hence, we must have that $h(\Delta) \leq 5$.
\end{proof}

\subsubsection{The case where $\Q(x) \subsetneq \Q(y) = \Q(z)$}\label{subsubsec:subfielddisc}

In this section, we assume that
\[ \Q(x) \subsetneq \Q(y) = \Q(z).\]
By Claim~\ref{cl9}, we have that $\Delta_x = \Delta$ and $\Delta_y = \Delta_z = l^2 \Delta$, where $l \in \{3/2, 2, 3, 4\}$. Also, by Claims~\ref{cl3} and \ref{cl5},
\[ [\Q(y) : \Q(x)] = [\Q(z) : \Q(x)] = 2 \mbox{ and } [\Q(x) : \Q] \geq 3.\]
So $h(\Delta) \geq 3$.

Each singular modulus of discriminant $\Delta$ occurs exactly twice among the first coordinates of the distinct $\Q$-conjugates of $(x, y, z)$. Each singular modulus of discriminant $l^2 \Delta$ occurs exactly once among the second coordinates of the distinct $\Q$-conjugates of $(x, y, z)$ and exactly once among the third coordinates. Let $(x_1, y_1, z_1), (x_2, y_2, z_2)$ be the conjugates such that $y_1, z_2$ are dominant.

\begin{claim}\label{cl12}
	Both of $x_1, x_2$ are dominant.
\end{claim}

\begin{proof}
	Suppose that at least one of $x_1, x_2$ is not dominant. There is then a conjugate $(x_3, y_3, z_3)$ with $x_3$ dominant and neither of $y_3, z_3$ dominant. Then
	\begin{align}\label{bd13}
		\lvert x_3 y_1 z_2 \rvert \geq (\exp(\pi \lvert \Delta \rvert^{1/2}) - 2079)(\exp(l \pi \lvert \Delta \rvert^{1/2}) - 2079)^2.
	\end{align}
	Since $h(\Delta) \geq 3$, there is a conjugate $(x_4, y_4, z_4)$ with none of $x_4, y_4, z_4$ dominant. Now let $\sigma \in S_4$ be such that $\sigma(2, 3, 4) \neq (3, 1, 2)$. If $\sigma(3) = 1$ and $\sigma(4) = 2$, then
	\begin{align}\label{bd14}
		&\lvert x_{\sigma(2)} y_{\sigma(3)} z_{\sigma(4)} \rvert \nonumber \\
		\leq &\left(\exp\left(\frac{\pi \lvert \Delta \rvert^{1/2}}{2}\right) + 2079\right)(\exp(l \pi \lvert \Delta \rvert^{1/2}) + 2079)^2.
	\end{align}
	If either $\sigma(3) \neq 1$ or $\sigma(4) \neq 2$, then
	\begin{align}\label{bd15}
		&\lvert x_{\sigma(2)} y_{\sigma(3)} z_{\sigma(4)} \rvert \nonumber \\ 
		\leq &(\exp(\pi \lvert \Delta \rvert^{1/2}) + 2079)(\exp(l \pi \lvert \Delta \rvert^{1/2}) + 2079) \nonumber \\
		&\left(\exp\left(\frac{l \pi \lvert \Delta \rvert^{1/2}}{2}\right) + 2079\right).
	\end{align}
	For every $l \in \{3/2, 2, 3, 4\}$, the bounds \eqref{bd13}, \eqref{bd14}, and \eqref{bd15} are incompatible with \eqref{eq:det} if $\lvert \Delta \rvert \geq 8$. By Lemma~\ref{lem:smallclass}, $\lvert \Delta \rvert \leq 7$ is impossible since $x \notin \Q$.
\end{proof}

Note that $x_1 = x_2$, since $x_1, x_2$ are both dominant. We therefore rearrange \eqref{eq:det} to obtain that
\begin{align}\label{eq:det2}
	y_1 z_2 (x_3 - x_4)  = \sum_{\substack{\sigma \in S_4\\ \sigma(3, 4) \neq (1, 2)}}  \mathrm{sgn}(\sigma) x_{\sigma(2)} y_{\sigma(3)} z_{\sigma(4)}.
\end{align}
We may then use Proposition~\ref{prop:sep} to obtain the lower bound
\begin{align}\label{bd16}
	\lvert y_1 z_2 (x_3 - x_4) \rvert \geq 800 \frac{(\exp(l \pi \lvert \Delta \rvert^{1/2}) - 2079)^2}{\lvert \Delta \rvert^{4}}.
\end{align}

\begin{claim}\label{cl13}
	We have that $l \in \{3/2, 2\}$ and:
	\begin{enumerate}
		\item If $l = 3/2$, then one of the following holds:
		\begin{enumerate}
			\item $h(\Delta_x) \leq 6$;
			\item $h(\Delta_x) \in \{7, 8\}$ and $\lvert \Delta_x \rvert \leq 5879$;
			\item $h(\Delta_x) \in \{9, 10\}$ and $\lvert \Delta_x \rvert \leq 1557$;
			\item $h(\Delta_x) \in \{11, 12, 13, 14\}$ and $\lvert \Delta_x \rvert \leq 790$.
		\end{enumerate}
		\item If $l = 2$, then one of the following holds:
		\begin{enumerate}
			\item $h(\Delta_x) \leq 4$;
			\item $h(\Delta_x) \in \{5, 6\}$ and $\lvert \Delta_x \rvert \leq 304$;
		\end{enumerate}
	\end{enumerate}
\end{claim}

\begin{proof}

For each $k \in \{3, 5, 7, 9, 11, 15\}$, there is some $a_\mathrm{min}(k)$, the value of which is given in Table~\ref{tab:bds}, with the property that: for every $l \in \{3/2, 2, 3, 4\}$, if $h(\Delta) \geq k$, then there exist conjugates $(x_3, y_3, z_3), (x_4, y_4, z_4)$ such that $x_3 \neq x_4$ and
\[ a(y_3), a(y_4), a(z_3), a(z_4) \geq a_\mathrm{min}(k),\]
where $a(y_i)$ (respectively $a(z_i)$) denotes the denominator of $y_i$ (respectively $z_i$).

\begin{table}[ht]
	\centering
	\begin{tabular}{|c |c|c|c|c|c|}
		\hline
		\multirow{2}{*}{$k$} & \multirow{2}{*}{$a_\mathrm{min}(k)$} & \multicolumn{4}{c|}{$l$} \\
		\cline{3-6}
		& &	$3/2$ & $2$ & $3$ & $4$\\
		\hline
		$3$ & 2 & & & \cellcolor[gray]{0.9} 2 & \cellcolor[gray]{0.9} 0\\
		$5$ & 3 & & 304 & &\\
		$7$ & 4 & 5879 & \cellcolor[gray]{0.9} 49 & &\\
		$9$ & 5 & 1557 &  & &\\
		$11$ & 6 & 790 & & &\\
		$15$ & 7 & \cellcolor[gray]{0.9} 515 & & &\\
		\hline
	\end{tabular}
	\caption{Upper bounds on $\lvert \Delta \rvert$ when $h(\Delta) \geq k$ for a given $l \in \{3/2, 2, 3, 4\}$.}\label{tab:bds}
\end{table}

Fix some $l \in \{3/2, 2, 3, 4\}$. Next, let $k \in \{3, 5, 7, 9, 11, 15\}$, and take the corresponding value of $a_\mathrm{min}(k)$ from Table~\ref{tab:bds}. Now let $\sigma \in S_4$ be such that $\sigma(3, 4) \neq (1, 2)$. If either $\sigma(3) = 1$ or $\sigma(4) = 2$ (note there are $8$ such $\sigma$), then 
\begin{align}\label{bd17}
	&\lvert x_{\sigma(2)} y_{\sigma(3)} z_{\sigma(4)} \rvert \nonumber \\ 
	\leq &(\exp(\pi \lvert \Delta \rvert^{1/2}) + 2079)(\exp(l \pi \lvert \Delta \rvert^{1/2}) + 2079) \nonumber \\
	&\left(\exp\left(\frac{l \pi \lvert \Delta \rvert^{1/2}}{a_\mathrm{min}(k)}\right) + 2079\right).
\end{align}
If $\sigma(3) = 2$ and $\sigma(4) = 1$ (note there are $2$ such $\sigma$), then 
\begin{align}\label{bd18}
	&\lvert x_{\sigma(2)} y_{\sigma(3)} z_{\sigma(4)} \rvert \nonumber \\
	\leq &\left(\exp\left(\frac{\pi \lvert \Delta \rvert^{1/2}}{2}\right) + 2079\right)\left(\exp\left(\frac{l \pi \lvert \Delta \rvert^{1/2}}{2}\right) + 2079\right)^2.
\end{align}
Otherwise (i.e. for the $12$ remaining $\sigma$),
\begin{align}\label{bd19}
	&\lvert x_{\sigma(2)} y_{\sigma(3)} z_{\sigma(4)} \rvert \nonumber \\
	\leq &(\exp(\pi \lvert \Delta \rvert^{1/2}) + 2079)\left(\exp\left(\frac{l \pi \lvert \Delta \rvert^{1/2}}{2}\right) + 2079\right) \nonumber \\
	&\left(\exp\left(\frac{l \pi \lvert \Delta \rvert^{1/2}}{a_\mathrm{min}(k)}\right) + 2079\right).
\end{align}

If $k$ is suitably large (for the given $l$), then the bounds \eqref{bd16}, \eqref{bd17}, \eqref{bd18}, and \eqref{bd19} are incompatible with \eqref{eq:det2} for large enough $\lvert \Delta \rvert$. The entries in Table~\ref{tab:bds} are the upper bounds on $\lvert \Delta \rvert$ which are obtained for a given $l$ when $h(\Delta) \geq k$ by taking suitable conjugates $(x_3, y_3, z_3), (x_4, y_4, z_4)$ with $x_3 \neq x_4$ and
\[ a(y_3), a(y_4), a(z_3), a(z_4) \geq a_\mathrm{min}(k).\]

Note that for $l = 3/2$ (respectively, for $l=2$) no upper bound on $\lvert \Delta \rvert$ is obtained if $k < 7$ (respectively if $k < 5$). An entry $b$ in a row $k$ and column $l$ of Table~\ref{tab:bds} is in a grey-shaded cell if there are no discriminants $\Delta$ satisfying:
\begin{enumerate}
	\item $h(\Delta) \geq k$,
	\item $\lvert \Delta \rvert \leq b$, and
	\item if $l=3/2$, then $\Delta \equiv 0, 4 \bmod 16$.
\end{enumerate}
(These conditions are easily checked in PARI.) In particular, this implies that $l \in \{3, 4\}$ is impossible, since $h(\Delta) \geq 3$ by Claim~\ref{cl5}. For $l \in \{3/2, 2\}$, we must be in one of the cases stated in the claim.
\end{proof}

\subsection{Eliminating the discriminants}\label{subsec:elim}

So far in Section~\ref{sec:trip}, we have shown that if $x, y, z$ are pairwise distinct singular moduli of respective discriminants $\Delta_x, \Delta_y, \Delta_z$ such that
\[ Ax + By + Cz \in \Q\]
for some $A, B, C \in \Q \setminus \{0\}$, then either we are in one of cases (1)--(5) of Theorem~\ref{thm:trip} or (without loss of generality) one of the following situations occurs:

\begin{enumerate}
	\item $\Q(x) = \Q(y) = \Q(z)$ and this field, denote it $L$, is an extension of $\Q$ of degree at least $3$ and one of the following holds:
	\begin{enumerate}
		\item $(\Delta_x, \Delta_y, \Delta_z) = (4 \Delta, \Delta, \Delta)$, where $\Delta \equiv 1 \bmod 8$ and $h(\Delta) \leq 6$;
		\item $(\Delta_x, \Delta_y, \Delta_z) = (4 \Delta, 4 \Delta, \Delta)$, where $\Delta \equiv 1 \bmod 8$ and $h(\Delta) \leq 5$;
		\item the field $L$ and the discriminants $\Delta_x, \Delta_y, \Delta_z$ are given in \cite[Table~2]{AllombertBiluMadariaga15};
	\end{enumerate}
	\item $\Q(x) \subset \Q(y) = \Q(z)$,  $[\Q(y) : \Q(x)] = 2$, $[\Q(x) : \Q] \geq 3$, $\Delta_y = \Delta_z$, and one of the following holds:
	\begin{enumerate}
		\item $9 \Delta_x = 4 \Delta_y$ and $\Delta_x \equiv 0, 4 \bmod 16$ and one of the following holds:
		\begin{enumerate}
			\item $h(\Delta_x) \leq 6$;
			\item $h(\Delta_x) \in \{7, 8\}$ and $\lvert \Delta_x \rvert \leq 5879$;
			\item $h(\Delta_x) \in \{9, 10\}$ and $\lvert \Delta_x \rvert \leq 1557$;
			\item $h(\Delta_x) \in \{11, 12, 13, 14\}$ and $\lvert \Delta_x \rvert \leq 790$;
		\end{enumerate}
		\item $4 \Delta_x = \Delta_y$ and one of the following holds:
		\begin{enumerate}
			\item $h(\Delta_x) \leq 4$;
			\item $h(\Delta_x) \in \{5, 6\}$ and $\lvert \Delta_x \rvert \leq 304$;
		\end{enumerate}
		\item $(\Delta_x, \Delta_y)$ is one of the 873 pairs given in Proposition~\ref{prop:difffundsub}.
	\end{enumerate}
\end{enumerate}
Here (1)(a) arises from Claim~\ref{cl10}, (1)(b) arises from Claim~\ref{cl11}, 1(c) and (2)(c) arise from Claim~\ref{cl7}, and (2)(a) and (2)(b) arise from Claim~\ref{cl13}.

The corresponding list of all such $(\Delta_x, \Delta_y, \Delta_z)$ may be generated straightforwardly in PARI. Here we make use of Proposition~\ref{prop:smallclass}, whence $h(\Delta) \leq 6$ implies that $\lvert \Delta \rvert \leq 4075$. For each triple of discriminants $(\Delta_x, \Delta_y, \Delta_z)$ belonging to this list, we then use PARI to show that there are no pairwise distinct singular moduli $x, y, z$ of the corresponding discriminants such that the set $\{1, x, y, z\}$ is linearly dependent over $\Q$. The necessary computations take about 40 minutes to run. This completes the proof of Theorem~\ref{thm:trip}. 

\begin{remark}\label{rmk:deg3}
	One might expect that, in analogy to case (3) of Theorem~\ref{thm:trip}, there would be an additional case in Theorem~\ref{thm:trip}, namely one where $x, y, z$ all generate the same number field of degree $3$ over $\Q$. In fact, this cannot occur, for reasons that we now explain.
	
	Suppose that $x, y$ are distinct singular moduli such that $\Q(x) = \Q(y)$ and this field has degree $3$ over $\Q$. In particular, by \cite[Corollary~3.3]{AllombertBiluMadariaga15}, the field $\Q(x)$ is not a Galois extension of $\Q$. Propositions~\ref{prop:difffund} and~\ref{prop:samefund}, together with the list of discriminants $\Delta$ with  $h(\Delta) = 3$, imply that either $\Delta_x = \Delta_y$ or $\{\Delta_x, \Delta_y\} \in \{ \{-23, -92\}, \{-31, -124\} \}$. 
	
	Hence, if $x, y, z$ are pairwise distinct singular moduli such that $\Q(x) = \Q(y) = \Q(z)$ and this field has degree $3$ over $\Q$, then at least two of $x, y, z$ have the same discriminant. Suppose that $x, y$ have the same discriminant. Then the equality $\Q(x) = \Q(y)$ implies that $\Q(x)$ is a Galois extension of $\Q$, which is a contradiction.
\end{remark}

\end{document}